\theoremstyle{plain}
\newtheorem{theorem}{Theorem}[section]
\newtheorem{conjecture}[theorem]{Conjecture}
\newtheorem{lemma}[theorem]{Lemma}
\theoremstyle{definition}
\newtheorem{example}[theorem]{Example}
\DeclareMathAlphabet{\mathpzc}{OT1}{pzc}{m}{it}
\begin{document}
	
	\title{A non-uniform extension of Baranyai's Theorem}
	\author{Jinye He \thanks{School of Mathematical Sciences, University of Science and Technology of China, Hefei, Anhui 230026, China. Email: zykxxxx@mail.ustc.edu.cn.}
    \and Hao Huang \thanks{Department of Mathematics, National University of Singapore. Research supported in part by a start-up grant at NUS and an MOE Academic Research Fund (AcRF) Tier 1 grant. Email: huanghao@nus.edu.sg.}
	\and Jie Ma \thanks{School of Mathematical Sciences, University of Science and Technology of China, Hefei, Anhui 230026,
		China. Research supported in part by the National Key R and D Program of China 2020YFA0713100, National Natural Science Foundation of China grant 12125106,
Innovation Program for Quantum Science and Technology 2021ZD0302904, and Anhui Initiative in Quantum Information Technologies grant AHY150200. Email: jiema@ustc.edu.cn.}}
	\date{}

\maketitle
\begin{abstract}
A celebrated theorem of Baranyai states that when $k$ divides $n$, the family $K_n^k$ of all $k$-subsets of an $n$-element set can be partitioned into perfect matchings. In other words, $K_n^k$ is $1$-factorable. In this paper, we determine all $n, k$, such that the family $K_n^{\le k}$ consisting of subsets of $[n]$ of size up to $k$ is $1$-factorable, and thus extend Baranyai's Theorem to the non-uniform setting.
In particular, our result implies that for fixed $k$ and sufficiently large $n$, $K_n^{\le k}$ is $1$-factorable if and only if $n \equiv 0$ or $-1 \pmod k$.
\end{abstract}

\section{Introduction}\label{sec_intro}
A \textit{hypergraph} is a system of subsets of finite sets. Formally, a hypergraph $H=(V, E)$ consists of a vertex set $V$, and an edge set $E$ which is a family of non-empty subsets of $V$. A \textit{$k$-uniform hypergraph} is a hypergraph such that all its edges are of size $k$.  A \textit{$\ell$-factor} of a hypergraph $H$ is a spanning sub-hypergraph $H'$ in which every vertex is contained in $\ell$ edges. We say that $H$ has a \textit{$\ell$-factorization} if its edge set can be partitioned into $\ell$-factors. A hypergraph $H$ is said to be \textit{$\ell$-factorable} if it admits a $\ell$-factorization.
There have been extensive research on $1$-factorization of graphs (see \cite{AF,CH85,CH89,FJ18,FJS,H81,MR,PR97,PT01,SSTF,W92} and the resolution of the $1$-factorization conjecture \cite{CKLOT}).

We denote the complete $k$-uniform hypergraph on $n$ vertices by $K_n^k$. Clearly, a necessary condition  for $K_{n}^k$ to be $1$-factorable is $k\mathrel{\mid}n$. It turns out that this is also sufficient for $k=2$ (folklore) and $k=3$ (proved by Peltesohn \cite{Peltesohn} in 1936). The sufficiency for general $k$ was eventually established by Baranyai \cite{Baranyai} in 1975 as follows.

\begin{theorem}[Baranyai \cite{Baranyai}]
	For any positive integers $k, n$ such that $k$ divides $n$, the complete $k$-uniform hypergraph $K_n^k$ can be decomposed into ${\binom {n}{k}}{\frac {k}{n}}={\binom {n-1}{k-1}}$ $1$-factors.
\end{theorem}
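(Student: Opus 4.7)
My plan is to establish Baranyai's theorem via the classical integer rounding approach. The goal is to partition $\binom{[n]}{k}$ into $m = \binom{n-1}{k-1}$ classes $\mathcal{F}_1, \ldots, \mathcal{F}_m$, each of which is a perfect matching of $[n]$, i.e., $n/k$ pairwise disjoint $k$-subsets covering $[n]$.

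I would construct a sequence of non-negative integer matrices $A_0, A_1, \ldots, A_n$, where $A_t$ has $m$ rows (indexed by prospective classes) and columns indexed by subsets $S \subseteq [t]$, with entry $A_t[i,S]$ recording how many $k$-subsets of $[n]$ placed in class $i$ have intersection exactly $S$ with $[t]$. The initial matrix $A_0$ is a single column with every entry equal to $n/k$ (which is an integer since $k \mid n$). At the other end, $A_n$ has nonzero entries only in columns indexed by $k$-subsets of $[n]$, and reading off this $0/1$ matrix directly yields the desired 1-factorization. Inductively, I would obtain $A_{t+1}$ from $A_t$ by splitting each column $S \subseteq [t]$ into two columns indexed by $S$ and $S \cup \{t+1\}$, distributing each entry $A_t[i,S]$ fractionally in proportion to the expected fraction of class-$i$ sets with trace $S$ that contain the new element $t+1$, and then restoring integrality.

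The engine for restoring integrality is the following rounding lemma: every non-negative real matrix whose row sums and column sums are all integers admits a non-negative integer matrix with the same row and column sums such that each entry differs from the original by less than $1$. This follows from the integrality of the bipartite transportation polytope, or equivalently via a flow argument on a bipartite network. The main obstacle will be the bookkeeping: one must verify that the fractional matrix produced by the splitting step has integer row and column sums so that the rounding lemma applies, and that the rounded $A_{t+1}$ continues to satisfy the invariants required for the induction. Specifically, each row sum must remain equal to $n/k$ throughout, and for every already-processed element $v \in [t]$ each row must contain exactly one unit of weight on columns whose index contains $v$, so that at $t=n$ the row structure is literally that of $m$ perfect matchings. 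A careful choice of the fractional splitting, together with induction on $t$, is what makes the argument work.
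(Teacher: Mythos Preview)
Your approach is essentially the paper's own (namely the proof of Theorem~\ref{thm_reduction} specialised to $\mathcal{L}=\{k\}$): introduce the ground-set elements one at a time and, at each step, use a fractional assignment plus an integrality argument to decide which part of each evolving $1$-factor absorbs the new element. The paper casts the integrality step as an integral max flow on a three-layer network $s \to \{\mathscr{A}_i\} \to \{S\} \to t$ with capacities $1$, $+\infty$, $\binom{n-\ell-1}{k-1-|S|}$; your bipartite matrix-rounding lemma is the standard equivalent device.

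One point of care, however: applying the rounding lemma to the \emph{full} split matrix $A'_{t+1}$ preserves total row and column sums but not the partial row sums $\sum_{S'\ni v} A_{t+1}[i,S']=1$ that encode the partition structure. For instance, with $n=4$, $k=2$, a row $(\tfrac12,\tfrac12,\tfrac12,\tfrac12)$ on columns $\{1\},\{2\},\{1,3\},\{2,3\}$ could legally round to $(0,0,1,1)$, covering the element $3$ twice. The remedy---and this is precisely what the paper's flow network computes---is to round only the \emph{increment} matrix $B[i,S]:=A'_{t+1}[i,S\cup\{t+1\}]$, whose row sums are all equal to $1$ and whose column sums are the integers $\binom{n-t-1}{k-1-|S|}$. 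After rounding $B$ to $\tilde B$ (each entry a floor or ceiling, hence between $0$ and $A_t[i,S]$), setting $A_{t+1}[i,S]:=A_t[i,S]-\tilde B[i,S]$ and $A_{t+1}[i,S\cup\{t+1\}]:=\tilde B[i,S]$ automatically keeps each row a partition of $[t+1]$. With this adjustment your outline goes through and matches the paper's argument exactly.
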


His proof was based on an ingenious use of the Max-Flow Min-Cut Theorem.
Generalizations and extensions of Baranyai's Theorem can be found in \cite{Ba14,Ba17,BJN,BN18,BB77,Ca76,HH93}.

In this paper, we mainly consider the non-uniform hypergraph $K_n^{\le k}$, whose vertex set is $[n]=\{1, \cdots, n\}$ and edge set consists of all the non-empty subsets of $[n]$ of size up to $k$, denoted by $\binom{[n]}{\le k}$. We show that when $k$ is fixed and $n$ is sufficiently large, a necessary and sufficient condition for such hypergraph to be $1$-factorable is that $n$ is congruent to $0$ or $-1$ modulo $k$. Our result is actually much more precise.

\begin{theorem}\label{thm_main}
For positive integers $n, k$ such that $k<n/2$, $K_n^{\le k}$ is $1$-factorable if and only if one of the two following conditions is met:\\
(i)  $n\equiv 0 \pmod k$ and $n\ge k(k-2)$,\\
(ii) $n\equiv -1 \pmod k$ and $n\ge k(\lceil \frac{k}{2}\rceil -1)-1$.
\end{theorem}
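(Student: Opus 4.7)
The plan is to prove necessity by a direct counting argument and sufficiency by explicit constructions built around Baranyai's Theorem.

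For necessity, suppose $\{F_1, \dots, F_D\}$ is a $1$-factorization of $K_n^{\le k}$, where $D = \sum_{i=1}^k \binom{n-1}{i-1}$ is the common degree. Let $a_i(F)$ denote the number of size-$i$ edges in $F$, so $\sum_i i\, a_i(F)=n$ and $\sum_F a_i(F) = \binom{n}{i}$. Write $n = qk+r$ with $0 \le r \le k-1$ and let $f_q = |\{F : a_k(F)=q\}|$. Since $a_k(F)\le q$ for every $F$, a simple averaging gives
\[
f_q \;\ge\; \binom{n}{k} - (q-1)D.
\]
When $1\le r \le k-2$, every $F$ with $a_k(F)=q$ has exactly $r$ residual vertices that must be covered by edges of sizes $\le r$, so $\sum_{i\le r} i\,a_i(F) = r$. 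Summing over such $F$ and comparing with $\sum_F\sum_{i\le r}i\, a_i(F)=\sum_{i\le r}i\binom{n}{i}$ yields the complementary upper bound
\[
r f_q \;\le\; \sum_{i\le r} i \binom{n}{i}.
\]
The plan is to show that these two bounds are numerically incompatible whenever $k<n/2$ and $1\le r\le k-2$, which forces $r\in\{0,k-1\}$. A finer version of the same bookkeeping (tracking $f_{q-1}$ and the distribution of small edges in non-maximal $1$-factors) should then supply the thresholds $n\ge k(k-2)$ and $n\ge k(\lceil k/2\rceil-1)-1$.

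For sufficiency in case (i), $n = mk$ with $m\ge k-2$, I would start with Baranyai's Theorem applied to $K_n^k$, obtaining $\binom{n-1}{k-1}$ pure $k$-matchings that exhaust the size-$k$ edges. The remaining $D-\binom{n-1}{k-1}=\sum_{i=1}^{k-1}\binom{n-1}{i-1}$ $1$-factors must be built from edges of sizes $<k$. If $K_n^{\le k-1}$ were itself $1$-factorable this would close by induction, but it generally is not, so the construction must instead replace some $k$-edges inside the Baranyai matchings by partitions of the same vertex set into smaller pieces, redistributing the freed-up slots into the $<k$ layer so that every smaller edge is still covered exactly once. The hypothesis $n\ge k(k-2)$ is what guarantees enough flexibility for these exchanges.

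For case (ii), $n=mk-1$ with $m\ge \lceil k/2\rceil$, I would use the phantom-vertex trick. Adjoin a dummy vertex $v^*$ to $[n]$ and apply Baranyai to $K_{n+1}^k$; each of the resulting $\binom{n}{k-1}$ perfect matchings, after deleting $v^*$ from its unique edge through $v^*$, becomes a $1$-factor of $K_n^{\le k}$ consisting of $(m-1)$ size-$k$ edges and one size-$(k-1)$ edge. A routine check shows these cover every size-$k$ and size-$(k-1)$ edge of $K_n^{\le k}$ exactly once. The remaining $1$-factors must then exhaust the edges of size $\le k-2$; since $K_n^{\le k-2}$ is not automatically $1$-factorable, I would again blend some of the phantom-derived $1$-factors with smaller edges to absorb any mismatch, using the bound $m\ge\lceil k/2\rceil$ as the source of flexibility.

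The main obstacle is precisely the blending step in both cases: after peeling off the top sizes, the residual hypergraph is typically not $1$-factorable, so naive recursion on $k$ fails and several edge sizes must be handled simultaneously. I expect the genuine construction to be either a non-uniform max-flow argument in the spirit of Baranyai's original proof, adapted so that rows of the incidence matrix are allowed varying sums tied to admissible partition shapes, or an explicit recursive scheme that performs controlled edge swaps across sizes. The tightness of $k(k-2)$ and $k(\lceil k/2\rceil-1)-1$ should come from specialising the necessity counting to small $n$, where the slack disappears and the inequalities above fail even in the allowed congruence classes.
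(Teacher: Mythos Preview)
Your necessity argument has a concrete gap. The averaging bound $f_q \ge \binom{n}{k}-(q-1)D$ is only useful when the right-hand side is positive, and this fails throughout the range $n$ close to $2k$. For instance, take $k=10$, $n=21$ (so $q=2$, $r=1$, and $k<n/2$): one computes $\binom{21}{10}=352716$ while $D=\sum_{j=0}^{9}\binom{20}{j}=431910$, so your lower bound is $-79194$ and gives no information; the upper bound $f_2\le 21$ then yields no contradiction. The paper's necessity proof (Lemmas~\ref{th1.3} and~\ref{th1.4}) also proceeds by showing infeasibility of the underlying linear system, but it uses Farkas' Lemma with carefully chosen separating hyperplanes $\vec{y}$ that differ according to the value of $j=\lfloor n/k\rfloor$; in particular, an entirely different $\vec{y}$ is needed for $j=2$, and yet another family of $\vec{y}$'s handles the threshold cases $n<k(k-2)$ and $n<k(\lceil k/2\rceil-1)-1$. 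Your ``finer bookkeeping'' for the thresholds is not specified and would in any case need to go well beyond tracking $f_{q-1}$.

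On the sufficiency side, your diagnosis of the obstacle is accurate --- naive recursion on $k$ fails --- but your plan does not resolve it, and the paper does something structurally different from ``blending'' Baranyai matchings. The key step (Theorem~\ref{thm_reduction}) is a flow argument that reduces $1$-factorability of $\binom{[n]}{\mathcal{L}}$ to the existence of a non-negative integer solution of a single linear system $A_{\mathcal{L}}^T\vec{x}=\vec{b}_{\mathcal{L}}$, where rows of $A_{\mathcal{L}}$ are the admissible partition shapes. The flow is used once, for this reduction, and then sufficiency becomes the purely arithmetic task of exhibiting such a solution (Lemmas~\ref{lem_solve1} and~\ref{lem_solve2}); Baranyai's Theorem is never invoked as a black box. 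Your phantom-vertex idea for $n\equiv -1\pmod k$ does reappear in spirit in the even-$k$ branch of Lemma~\ref{lem_solve2}, but even there it is executed through the linear system on $[n+1]$ rather than through explicit edge swaps.
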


The $k \ge n/2$ case can be reduced to the previous range by the following equivalence.

\begin{theorem}\label{thm_equiv}
For positive integers $n, k$ such that $n/2 \le k \le n-1$, $K_{n}^{\le k}$ is $1$-factorable if and only if $K_n^{\le n-k-1}$ is $1$-factorable.
\end{theorem}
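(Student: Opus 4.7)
The natural framework is to split the edges of $K_n^{\le k}$ into the ``small'' family $\mathcal{S} = E(K_n^{\le n-k-1})$ of sizes $\le n-k-1$ and the ``middle'' family $\mathcal{M}$ of sizes between $n-k$ and $k$. The hypothesis $k \ge n/2$ ensures $n-k \le k$, so $\mathcal{M}$ is closed under complementation $E \mapsto [n]\setminus E$, and for every $E \in \mathcal{M}$ the pair $\{E,[n]\setminus E\}$ is a $2$-edge $1$-factor of $K_n^{\le k}$. Pairing complementary edges therefore gives a canonical $1$-factorization of $\mathcal{M}$, which immediately yields one direction: given any $1$-factorization of $K_n^{\le n-k-1}=\mathcal{S}$, adjoining the complementary-pair $1$-factorization of $\mathcal{M}$ produces a $1$-factorization of $K_n^{\le k}$.

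For the converse, the plan is to start with an arbitrary $1$-factorization $\mathcal{L}$ of $K_n^{\le k}$ and rearrange it by local swaps until every $1$-factor is \emph{resolved}, meaning it is either contained entirely in $\mathcal{S}$ or equals $\{E,[n]\setminus E\}$ for some $E \in \mathcal{M}$. The swap is as follows: if $\mathcal{F}_1 \in \mathcal{L}$ is unresolved and contains some $E \in \mathcal{M}$, let $\mathcal{F}_2$ be the $1$-factor containing $[n]\setminus E$, and write $\mathcal{F}_1 = \{E\}\cup \mathcal{A}$ and $\mathcal{F}_2 = \{[n]\setminus E\}\cup \mathcal{B}$, so that $\mathcal{A}$ partitions $[n]\setminus E$ and $\mathcal{B}$ partitions $E$. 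Replace $\mathcal{F}_1,\mathcal{F}_2$ by $\mathcal{F}_1' := \{E,[n]\setminus E\}$ and $\mathcal{F}_2' := \mathcal{A}\cup\mathcal{B}$; the result is again a $1$-factorization of $K_n^{\le k}$, with $\mathcal{F}_1'$ resolved. Using the potential
\[
\Phi(\mathcal{L}) \;=\; \sum_{\mathcal{F}\text{ unresolved in }\mathcal{L}} |\mathcal{F}\cap \mathcal{M}|,
\]
a short bookkeeping check shows that each swap decreases $\Phi$ by at least $2$, so the iteration terminates. When $\Phi = 0$, every $\mathcal{M}$-edge sits in a resolved $1$-factor that is necessarily the complementary pair containing it; removing these leaves a $1$-factorization of $\mathcal{S} = K_n^{\le n-k-1}$.

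The main technical point will be the swap lemma itself: that the swap is always available when $\Phi>0$ and that it strictly decreases $\Phi$. Both reduce to the simple observation that an unresolved $1$-factor containing $E\in\mathcal{M}$ cannot also contain $[n]\setminus E$ (else it would equal the already resolved $\{E,[n]\setminus E\}$); this forces $\mathcal{F}_1\ne \mathcal{F}_2$, and a quick check shows $\mathcal{F}_2$ is itself unresolved, so the swap applies and the bookkeeping goes through. The whole argument rests only on the complementation symmetry on $\mathcal{M}$, with no divisibility or extremal input, and it handles the degenerate case $k=n-1$ (where $\mathcal{S}$ is empty and the iteration forces $\mathcal{L}$ into the complementary-pair $1$-factorization of $K_n^{\le n-1}$) uniformly.
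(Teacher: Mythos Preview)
Your proposal is correct and follows essentially the same approach as the paper: both directions use the complementation pairing on $\mathcal{M}$, and for the converse both perform the identical swap (remove the partition $\mathcal{A}$ of $[n]\setminus E$ from the factor containing $E$, remove $[n]\setminus E$ from its own factor, and recombine as $\{E,[n]\setminus E\}$ and $\mathcal{A}\cup\mathcal{B}$). The only difference is bookkeeping for termination: the paper iterates over sets by decreasing size from $k$ down to $\lceil n/2\rceil$ and implicitly relies on the fact that a complementary pair, once formed, is never touched again, whereas you use the potential $\Phi$ to guarantee termination without fixing an order---a slightly cleaner argument, but not a different idea.
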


Theorems \ref{thm_main} and \ref{thm_equiv} together provide a complete characterization of all $n, k$ such that $K_{n}^{\le k}$ has a $1$-factorization.

The rest of this paper is organized as follows: in the next section, using the Max-Flow Min-Cut Theorem, we show that the $1$-factorization problem is equivalent to finding non-negative integer solutions to a system of linear equations given by the partitions of $[n]$ into parts of size up to $k$. Section \ref{sec_solve} determines when this equivalent problem is feasible. Section \ref{sec_multilayer} discusses an extension of Theorem \ref{thm_main} to other families of subsets obtained from taking the union of multiple levels of the hypercube. The last section contains some concluding remarks and open problems.

\section{A reduction using network flow}\label{sec_flow}
In this section, we reduce the $1$-factorization problem of $K_{n}^{\le k}$ to finding non-negative integer solutions to a system of linear equations (as we should see soon, both problems in fact are equivalent).
Throughout this section, let $n,k$ be two fixed positive integers with $n\ge k$,
and let $\mathcal{L}$ be a set consisting of $k$ and some positive integers in $\{1,2,\cdots,k-1\}$.
We denote by $\binom{[n]}{\mathcal{L}}$ the family of subsets of $[n]$ whose size is an element of $\mathcal{L}$.
We will prove a more general reduction for the $1$-factorization of $\binom{[n]}{\mathcal{L}}$ which holds for any $\mathcal{L}$ (the case when $\mathcal{L}=[k]$ corresponds to our main results).

For given $n,k$ and $\mathcal{L}$, an {\it $(n, \mathcal{L})$-type}, or simply a {\it type} is a vector $\vec{\lambda}=(\lambda_1,\lambda_2,\cdots,\lambda_k)$ in $\mathbb {Z}_{\ge 0}^{k}$
such that $\sum_{j\in \mathcal{L}}j\cdot \lambda_j=n$ and $\lambda_j=0$ for every $j\in [k]\backslash\mathcal{L}$.
If $\mathcal{L}=[k]$, then we also call it an {\it $(n, k)$-type}.
Let $|\vec{\lambda}|=\sum_{j=1}^{k}\lambda_k$.	
We say that $\mathscr{A}=\{A_1,A_2,\cdots,A_t\}$ is a \textit{$\vec{\lambda}$-partition} of $[n]$, if $A_i$'s are pairwise disjoint subsets of $[n]$ such that $\cup_{i=1}^t A_i =[n]$, and for every $1 \le j \le k$, there are $\lambda_j$ subsets $A_i$'s of size $j$.

For given $n,k$, we now define a matrix $A_\mathcal{L}$, which we will soon show to be closely related to the $1$-factorization of $\binom{[n]}{\mathcal{L}}$.
Let $A_\mathcal{L}$ be the matrix with $k$ columns composed by all admissible $(n, \mathcal{L})$-types $\vec{\lambda}$ as follows:
	$$A_\mathcal{L}= \begin{pmatrix}
		\vec{\lambda}\\
		\vec{\lambda}'\\
		\vdots
	\end{pmatrix}$$	
When $\mathcal{L}$ is clear from the context, we will simply write it for $A$. Below we give an example of $A_\mathcal{L}$.
\begin{example}
	For $n=7$, $k=3$ and $\mathcal{L}=[3]$, the matrix $A$ is defined as follows. For example, the third row corresponds to $\vec{\lambda}=(2, 1, 1)$ which is a $(7, 3)$-type because $7=(2,1,1)\cdot (1,2,3)^T$.
	\begin{align}
		&A_\mathcal{L}=
		\begin{pmatrix}
			1 & 0 & 2 \\
			0 & 2 & 1 \\
			2 & 1 & 1 \\
			4 & 0 & 1 \\
			1 & 3 & 0 \\
			3 & 2 & 0 \\
			5 & 1 & 0 \\
			7 & 0 & 0
		\end{pmatrix}\nonumber
	\end{align}
\end{example}

For given $n, k$ and $\mathcal{L}$, suppose the hypergraph $\binom{[n]}{\mathcal{L}}$ can be decomposed into $1$-factors $\mathscr{A}_1, \cdots, \mathscr{A}_m$.\footnote{It is easy to see that we must have $m=\sum_{j\in \mathcal{L}} \binom{n-1}{j-1}.$}
Denote the number of $\mathscr{A}_j$'s that are $\vec{\lambda}$-partitions of $[n]$ by $x_{\vec{\lambda}}$. Then the total number of $i$-subsets of $[n]$ appeared in $\{\mathscr{A}_1, \cdots, \mathscr{A}_m\}$ is equal to the $i$-th coordinate of $(A_\mathcal{L})^T \vec{x}$, where $\vec{x}=(x_{\vec{\lambda}})$ is a column vector indexed by all the $(n,\mathcal{L})$-types $\vec{\lambda}$.
Observe that $\binom{[n]}{\mathcal{L}}$ contains exactly $\binom{n}{i}$ subsets of size $i$ for each $i\in \mathcal{L}$.
Thus by setting $\vec{b}_\mathcal{L}=(b_1, \cdots, b_k)^T$ with $b_i=\binom{n}{i}$ for $i\in \mathcal{L}$ and $b_i=0$ for $i\in [k]\backslash \mathcal{L}$, we immediately have
\begin{equation}\label{eq1.1}
	(A_\mathcal{L})^T \vec{x}=\vec{b}_\mathcal{L}.
\end{equation}
Therefore the system \eqref{eq1.1} having a non-negative integer solution (meaning that all $x_{\vec{\lambda}}$ are non-negative integers) is a necessary condition for $\binom{[n]}{\mathcal{L}}$ to be $1$-factorable.

Our next theorem shows that this is indeed sufficient.
We remark that for Baranyai's Theorem (corresponding to $n,k$ and $\mathcal{L}=\{k\}$ with $k\mid n$), a non-negative integer solution to the corresponding system exists trivially because the only type involved is $(0, \cdots, 0, n/k)$ and $\frac{n}{k} \mid \binom{n}{k}$.
\begin{theorem}\label{thm_reduction}
Given positive integers $n, k$ and a set $\mathcal{L}$ of positive integers with $n\ge k$ and $k\in \mathcal{L}\subseteq [k]$,
the hypergraph $\binom{[n]}{\mathcal{L}}$ is $1$-factorable if and only if the system of linear equations \eqref{eq1.1} associated with it has a non-negative integer solution.
\end{theorem}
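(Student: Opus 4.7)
The forward direction is already noted in the discussion preceding the theorem, so my plan concentrates on the converse: given a non-negative integer solution $\vec{x}=(x_{\vec{\lambda}})$ of \eqref{eq1.1}, I would construct an explicit $1$-factorization by adapting the iterative refinement scheme from Baranyai's original proof. Set $m := \sum_{\vec{\lambda}} x_{\vec{\lambda}}$ and enumerate the target partition slots $p = 1, \ldots, m$, assigning each $p$ a type $\vec{\lambda}^{(p)}$ so that each type $\vec{\lambda}$ is used by exactly $x_{\vec{\lambda}}$ slots. The aim is to specify, for every $p$, an actual $\vec{\lambda}^{(p)}$-partition of $[n]$ so that each edge of $\binom{[n]}{\mathcal{L}}$ appears in exactly one such partition.

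I would reveal the ground set one element at a time. For $s=0,1,\ldots,n$, maintain a non-negative integer matrix $M_s$ whose rows are indexed by slots $p$ and whose columns are indexed by pairs $(T,j)$ with $T \subseteq [s]$, $j \in \mathcal{L}$, $|T| \le j$; the intended meaning is that $M_s(p,T,j)$ counts the size-$j$ parts in slot $p$ whose eventual intersection with $[s]$ equals $T$. The invariants to maintain are $\sum_T M_s(p,T,j) = \lambda^{(p)}_j$ for every $(p,j)$, together with the column condition $\sum_p M_s(p,T,j) = \binom{n-s}{j-|T|}$ for every $(T,j)$. Both hold trivially at $s=0$: the column condition reduces to the $j$-th coordinate of \eqref{eq1.1}. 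At $s=n$ the entries are forced into $\{0,1\}$, and the $1$-entries of row $p$ are exactly the parts of the desired partition at slot $p$, so it suffices to execute the refinement step $s \to s+1$.

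That step splits each column $(T,j)$ into $(T,j)$ and $(T\cup\{s+1\},j)$ according to whether element $s+1$ lies in the corresponding part. First I would write down a real-valued target $M_{s+1}^{*}$ by the natural proportional rule $M_{s+1}^{*}(p,T\cup\{s+1\},j) = M_s(p,T,j)\cdot \tfrac{j-|T|}{n-s}$; a direct computation using Pascal's identity confirms that the resulting row sums, column sums, and pair-sums $M_{s+1}^{*}(p,T,j)+M_{s+1}^{*}(p,T\cup\{s+1\},j)=M_s(p,T,j)$ are all integers. The heart of the argument is then to round $M_{s+1}^{*}$ to a non-negative integer matrix while simultaneously preserving all three families of linear constraints. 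This is precisely where the Max-Flow Min-Cut argument of Baranyai's original proof enters: I would encode the triple constraint system as node-balance equations of an auxiliary bipartite transportation network, whose constraint matrix is totally unimodular, so the real polytope cut out by these constraints has integer vertices and an integer rounding within $L^\infty$-distance $<1$ of $M_{s+1}^{*}$ exists. The main technical subtlety I anticipate is arranging this network cleanly so that the pair-sum constraints really do reduce to standard node-balance constraints; once that setup is fixed, the induction on $s$ terminates at $M_n$ and produces the desired $1$-factorization.
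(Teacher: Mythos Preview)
Your approach mirrors the paper's Baranyai-style induction closely: both reveal elements one at a time, maintain the same column invariant $\sum_p M_s(p,T,j)=\binom{n-s}{j-|T|}$, and invoke a flow/integrality argument at the refinement step. However, the three constraint families you list are not sufficient. Observe first that your row-sum constraints follow automatically from the pair-sum constraints, so you really have only two independent families; those two decouple into independent per-column splitting problems, trivially solvable in integers but not yielding partitions. Indeed, at $s=n$ your invariants say only that each slot carries the correct number of $j$-sets and each $j$-set sits in exactly one slot; they do not force the sets within a slot to be disjoint (for $n=4$, $\mathcal{L}=\{2\}$ one can satisfy all three families while placing $\{1,2\}$ and $\{1,3\}$ in the same slot).

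The missing ingredient is precisely what the paper builds into its network via the capacity-$1$ arcs from the source to each partition vertex $\mathscr{A}_i$: at every step, \emph{exactly one part in each slot receives the new element}, i.e.\ $\sum_{T\subseteq[s],\,j} M_{s+1}(p,T\cup\{s+1\},j)=1$ for every $p$. Your proportional target does satisfy this (assuming inductively that slot $p$ partitions $[s]$, the left side is $\tfrac{1}{n-s}\sum_{T,j} M_s(p,T,j)(j-|T|)=\tfrac{n-s}{n-s}=1$), but you must add it to the rounding problem and carry it as an invariant. Once it is included, the rounding step becomes the genuine bipartite transportation problem between slots $p$ and pairs $(T,j)$ that the paper solves with Max-Flow Min-Cut and the Integral Flow Theorem; with that correction your outline matches the paper's proof.
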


We will imitate Baranyai's ideas and construct a flow network to prove Theorem \ref{thm_reduction}.
Here we give a brief review of the definition of flow network and the statement of the Max-Flow Min-Cut Theorem of Ford and Fulkerson (1956),
to facilitate our later discussion. 
A {\it network} is a finite digraph $D = (V,E)$ together with two distinguished vertices called the {\it source} $s$ and the {\it sink} $t$, and a {\it capacity function} $\kappa : E(D) \rightarrow \mathbb{R}_{\ge 0}$ which associates a non-negative real number $\kappa(a)$ to each arc $a \in E(D)$.
The source $s$ must be the tail of every arc containing $s$, and the sink $t$ must be the head of every arc containing $t$.
We further assume that $D$ does not contain any arc of the form $a = (v, v)$ for a vertex $v \in V$. 
A {\it flow} on $D$ is a function $f: E(D) \rightarrow \mathbb{R}_{\ge 0}$ which assigns to each arc $a \in E(D)$ a non-negative real number $f(a)$ such that
\begin{enumerate}
	\item (Capacity Constraint) $0 \le f(a) \le \kappa(a)$ for all arcs $a \in E(D)$;
	\item (Conservation of Flow) for each vertex $v\in V\setminus \{s, t\}$, we have$$\sum\limits_{a\in E(D):~\textrm{$v$ is the head of $a$}}f(a) = \sum\limits_{a\in E(D):~\textrm{$v$ is the tail of $a$}}f(a).$$
\end{enumerate}
The {\it value} of the flow $f$, denoted by $|f|$, is the sum of $f(a)$ over all arcs $a$ leaving $s$.
A {\it cut} $(S,T)$ is a partition of $V(D)=S \cup T$ such that $s \in S$ and $t \in T$. 
The {\it capacity} of a cut $(S,T)$, denoted by $c(S,T)$, is the sum of the capacities of the arcs which has tail in $S$ and head in $T$, 
that is $$c(S,T)=\sum_{xy \in E(D):~x \in S, ~ y \in T} \kappa(xy).$$

\begin{theorem}[The Max-Flow Min-Cut Theorem]
Given a flow network $D = (V,E)$, the maximum value of a flow on $D$ is equal to the minimum capacity over all cuts in $D$.
\end{theorem}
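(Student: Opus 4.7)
The plan is to prove both inequalities $\max |f| \le \min c(S,T)$ and $\max |f| \ge \min c(S,T)$, thereby establishing equality. For the easy direction, I would fix any flow $f$ and any cut $(S,T)$, and sum the conservation identity over every $v \in S \setminus \{s\}$ while adding the definitional identity $|f| = \sum_{a \text{ leaves } s} f(a) - \sum_{a \text{ enters } s} f(a)$. After cancellation (every arc with both endpoints in $S$ contributes $+f(a)-f(a)=0$), what remains is
\begin{equation*}
|f| \;=\; \sum_{\substack{a=(x,y)\\ x\in S,\, y\in T}} f(a) \;-\; \sum_{\substack{a=(x,y)\\ x\in T,\, y\in S}} f(a).
\end{equation*}
Using $0 \le f(a) \le \kappa(a)$, this yields $|f| \le c(S,T)$. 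As a bonus, equality here holds exactly when every forward arc across the cut is saturated and every backward arc carries zero flow, which will be used below.

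For the reverse direction, I would argue constructively via the residual network and augmenting paths. Given an arbitrary flow $f$, define the residual digraph $D_f$ on the same vertex set: for each arc $a=(x,y)$ of $D$, include a forward residual arc $(x,y)$ with capacity $\kappa(a)-f(a)$ if this is positive, and a backward residual arc $(y,x)$ with capacity $f(a)$ if $f(a)>0$. An \emph{augmenting path} is a directed $s$-to-$t$ path in $D_f$; along such a path one can push a positive amount of flow (the minimum residual capacity) to obtain a new valid flow $f'$ with $|f'|>|f|$. Therefore, if $f$ is a maximum flow, no augmenting path exists.

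Assuming $f$ is a maximum flow (whose existence I address below), let $S$ be the set of vertices reachable from $s$ in $D_f$ and $T=V\setminus S$. Since there is no augmenting path, $t\in T$, so $(S,T)$ is a genuine cut. By construction, no residual arc crosses from $S$ to $T$: this forces $f(a)=\kappa(a)$ on every original arc from $S$ to $T$ and $f(a)=0$ on every original arc from $T$ to $S$. Substituting into the identity displayed above gives $|f|=c(S,T)$, so this cut achieves the lower bound, completing the proof.

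The main obstacle is establishing existence of a maximum flow and, relatedly, that the augmenting-path process terminates. When all capacities are rational (in particular, integers — which is the only case actually needed elsewhere in this paper), one can scale to integers, and then each augmentation strictly increases $|f|$ by at least $1$ while $|f|$ is bounded by $\sum_{a\text{ leaves }s}\kappa(a)$, so the process terminates at a maximum flow. For general real capacities, Ford--Fulkerson may fail to terminate, and the cleanest remedy is either to invoke the Edmonds--Karp refinement (always augment along a shortest path in $D_f$), which terminates in $O(|V|\cdot|E|)$ iterations regardless of capacities, or to obtain existence by a compactness argument on the polytope $\{f : 0\le f\le \kappa,\ f\text{ satisfies conservation}\}$ and then apply the residual-cut construction directly to a maximizer. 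Either route suffices, and for the integer-capacity setting used by Theorem~\ref{thm_reduction} no such subtlety arises.
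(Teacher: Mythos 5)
Your proposal is a correct and complete proof, but note that the paper does not prove this statement at all: it quotes the Max-Flow Min-Cut Theorem of Ford and Fulkerson (together with the Dantzig--Fulkerson Integral Flow Theorem) as a classical black box and only uses it inside the proof of Theorem \ref{thm_reduction}. So there is nothing in the paper to compare against line by line; what you have written is the standard augmenting-path argument, and it is sound. Your weak-duality direction (summing conservation over $S\setminus\{s\}$ to get $|f|=\sum_{S\to T}f(a)-\sum_{T\to S}f(a)\le c(S,T)$), the residual-graph cut $S=\{$vertices reachable from $s$ in $D_f\}$ at a maximum flow, and the saturation/zero-flow characterization are exactly the classical proof, and you correctly flag the one genuine subtlety -- termination of Ford--Fulkerson may fail for irrational capacities -- and resolve it either by Edmonds--Karp or by extracting a maximizer from the compact polytope $\{f:0\le f\le\kappa,\ f \text{ conservative}\}$ and applying the residual-cut construction to it, which matches the theorem as stated (real, finite capacities). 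One small caveat relative to how the theorem is actually used in Section \ref{sec_flow}: the network built there assigns capacity $+\infty$ to the arcs $\mathscr{A}_i\to S^{(j)}$, so "all capacities are integers" is not literally true in that application; this is harmless because those infinite capacities can be replaced by any integer at least $M$ without changing the maximum flow (the cut $(\{s\},V\setminus\{s\})$ has capacity $M$), after which your integer-capacity termination argument and the integral-flow conclusion apply verbatim.
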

We will also utilize the Integral Flow Theorem by Dantzig and Fulkerson \cite{DF}.
\begin{theorem}[The Integral Flow Theorem]\label{DF}
 If $D = (V,E)$ is a network in which every arc has integral capacity, then there exists a maximum flow $f$ on $D$ such that for each $a\in E(D)$, $f(a)$ is an integer.
\end{theorem}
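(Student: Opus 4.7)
The plan is to prove the theorem via the classical augmenting path construction, starting from the zero flow and maintaining integrality as an invariant throughout. Since the Max-Flow Min-Cut Theorem has just been stated, the only genuinely new content is to verify that each augmentation step preserves integrality and that the process terminates in finitely many steps.

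First I would set $f_0 \equiv 0$, which is trivially integer-valued and satisfies both the capacity constraint and conservation of flow. I then iteratively produce a sequence of integer flows $f_0, f_1, f_2, \ldots$ with strictly increasing values. At step $i$, if $f_i$ is already a maximum flow I stop. Otherwise, I consider the residual network $D_{f_i}$ and look for an $s$-$t$ augmenting path $P$: a vertex sequence $s = v_0, v_1, \ldots, v_\ell = t$ in which each consecutive pair is joined either by a forward arc $(v_j, v_{j+1}) \in E$ with $f_i(v_j, v_{j+1}) < \kappa(v_j, v_{j+1})$ or by a backward arc with $(v_{j+1}, v_j) \in E$ and $f_i(v_{j+1}, v_j) > 0$. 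If no such path existed, then the set $S$ of vertices residually reachable from $s$, together with $T = V\setminus S$, would form a cut with $t \in T$, every forward arc across which is saturated and every backward arc of which carries zero flow, giving $|f_i| = c(S,T)$ and thereby certifying $f_i$ as maximum by the Max-Flow Min-Cut Theorem. Assuming such a $P$ exists, the residual capacity along each arc of $P$ is either $\kappa(u,v)-f_i(u,v)$ or $f_i(v,u)$, hence a positive integer by the inductive integrality of $f_i$. The bottleneck $\delta_i$, defined as the minimum of these residual capacities, therefore satisfies $\delta_i \ge 1$ and is an integer.

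I next define $f_{i+1}$ by adding $\delta_i$ on the forward arcs of $P$ and subtracting $\delta_i$ on the backward arcs of $P$, leaving all other arcs unchanged. A direct verification shows that $f_{i+1}$ remains within $[0,\kappa]$ arc-by-arc and preserves the conservation law at every intermediate vertex of $P$ (and trivially at vertices off $P$); by construction $f_{i+1}$ is integer-valued with $|f_{i+1}| = |f_i| + \delta_i \ge |f_i| + 1$. For termination, the flow value is bounded above by the capacity $c(\{s\}, V\setminus\{s\})$ of the trivial source cut, which is a finite integer. Since $|f_i|$ increases by at least one per step, the procedure halts after at most $c(\{s\}, V\setminus\{s\})$ iterations at some integer-valued flow $f_N$; because it halts only when no augmenting path exists, the argument of the previous paragraph (or a direct appeal to the Max-Flow Min-Cut Theorem) shows $f_N$ is a maximum flow.

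The main conceptual step is the equivalence between ``no residual augmenting path'' and ``maximum flow,'' which is essentially the Max-Flow Min-Cut Theorem applied to the cut determined by the residually reachable set from $s$; with that theorem already in hand, there is no remaining obstacle, and the integrality invariant is immediate from the definition of the bottleneck $\delta_i$.
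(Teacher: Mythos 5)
Your proof is correct; the only thing to note is that the paper does not prove this statement at all -- it is quoted as a known result of Dantzig and Fulkerson \cite{DF}, so there is no ``paper proof'' to compare against. What you give is the standard Ford--Fulkerson augmenting-path argument: start from the zero flow, augment along residual paths by the bottleneck $\delta_i$, observe that integrality of the capacities and of the current flow forces $\delta_i \ge 1$, and bound the number of iterations by the capacity of the trivial cut $(\{s\}, V\setminus\{s\})$. All steps check out, including the certificate of maximality at termination (for which, incidentally, weak duality -- any flow value is at most any cut capacity -- already suffices; you do not need the full Max-Flow Min-Cut Theorem). One small remark tied to how the theorem is used in Section 2: the network constructed there assigns capacity $+\infty$ to the arcs $(\mathscr{A}_i, S^{(j)})$, so strictly speaking the hypothesis ``every arc has integral capacity'' is stretched; your termination bound via the trivial source cut still applies there because all arcs leaving $s$ have capacity $1$, but if you wanted the statement to cover networks with some infinite capacities you would need to add the observation that some $s$--$t$ cut of finite integral capacity exists (or simply truncate infinite capacities at that finite value).
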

Now we are ready to prove Theorem \ref{thm_reduction}.
In the coming proofs, for any integers $a,b$, the binomial coefficient $\binom{a}{b}$ is interpreted as zero whenever $a<0$, $b<0$ or $a<b$.
In particular, we have $\binom{0}{b}=1$ if $b=0$ and $\binom{0}{b}=0$ otherwise.
In this way, $\binom{a}{b}=\binom{a-1}{b-1}+\binom{a-1}{b}$ holds for any integers $a,b$.

\bigskip

\noindent {\bf Proof of Theorem \ref{thm_reduction}}:
Throughout this proof, $n, k$ and $\mathcal{L}$ are fixed.
From our previous discussions, it suffices to show that if the system \eqref{eq1.1} has a non-negative integer solution $\vec{x}=(x_{\vec{\lambda}})$ where $\vec{\lambda}$ is over all $(n,\mathcal{L})$-types,
then $\binom{[n]}{\mathcal{L}}$ is $1$-factorable.

For a given $(n, \mathcal{L})$-type $\vec{\lambda}$, we slightly extend the definition of a $\vec{\lambda}$-partition of $[n]$ to partitions of $[\ell]$ for any $0 \le \ell \le n$.
A partition $\mathscr{A}=\{A_1,A_2,\dots,A_t\}$ of $[\ell]$ with $t=|\vec{\lambda}|$ is called a {\it $\vec{\lambda}$-partition} of $[\ell]$,
if for every $j\in \mathcal{L}$, we assign the label, which we call {\it potential value}, $j$ to exactly $\lambda_j$ subsets $A_i$.
We point out here that repetitions are allowed only for empty sets.
Let $$M=\sum_{j\in \mathcal{L}}\binom{n-1}{j-1}.$$
We will prove the following statement by induction on $\ell$: for any $0\le \ell\le n$, there exists a collection of $\mathscr{A}_1,\mathscr{A}_2,\dots, \mathscr{A}_M$ of partitions
of $[\ell]$ such that all of the following hold:
	\begin{enumerate}
        \item[(1)] each set $S$ appeared in each partition is associated with a potential value $j\in \mathcal{L}$ with $j\ge |S|$,
		\item[(2)] for each $(n, \mathcal{L})$-type $\vec{\lambda}$, there are exactly $x_{\vec{\lambda}}$ partitions $\mathscr{A}_i$ that are $\vec{\lambda}$-partitions, and
		\item[(3)] for each $S\subseteq [\ell]$ and each $j\in \mathcal{L}$ with $j\geq |S|$, $S$ occurs $\binom{n-\ell}{j-|S|}$ times with potential value $j$ as subsets in the partitions $\mathscr{A}_1, \cdots, \mathscr{A}_M$.
	\end{enumerate}
Observe that when $\ell=n$, the third property ensures that every set $S$ in $\binom{[n]}{\mathcal{L}}$ appears exactly once (only with potential value $j=|S|$), which would provide a 1-factorization of $\binom{[n]}{\mathcal{L}}$.
It would be helpful to view this inductive proof as an evolution where each set (say with potential value $j$) in the partitions grows from an empty set to a set of size $j$ gradually.
	
Now we start the proof. For the base case when $\ell=0$, the existence of $\{\mathscr{A}_1, \cdots, \mathscr{A}_M\}$ is given by the non-negative integer solution $\vec{x}$ of the system \eqref{eq1.1}.
This is because for each $(n, \mathcal{L})$-type $\vec{\lambda}$ we could construct $x_{\vec{\lambda}}$ partitions formed by taking $|\vec{\lambda}|$ empty sets, and assigning a potential value $j\in \mathcal{L}$ to $\lambda_j$ of them.
Note that the total number of partitions is indeed
$$\sum_{\vec{\lambda}} x_{\vec{\lambda}}=\frac{(1, 2, \cdots, k) \cdot (A_\mathcal{L})^T \vec{x}}{n}=\frac{(1, 2, \cdots, k) \cdot \vec{b}_\mathcal{L}}{n} = M.$$
	
Now for the inductive step, assume that the statement holds for some $0\le \ell\le n-1$. We construct the following network.
	\begin{figure}[ht]
		\centering
		\tikzset{every picture/.style={line width=0.75pt}} 

\begin{tikzpicture}[x=0.75pt,y=0.75pt,yscale=-1,xscale=1]
			\filldraw [black] (343,28) circle (2pt);
			\filldraw [black] (343,62) circle (2pt);
			\filldraw [black] (343,97) circle (2pt);
			\filldraw [black] (343,161) circle (2pt);
			\filldraw [black] (343,185) circle (2pt);
			\filldraw [black] (343,208) circle (2pt);
			\filldraw [black] (343,232) circle (2pt);
			\filldraw [black] (227,59) circle (2pt);
			\filldraw [black] (227,83) circle (2pt);
			\filldraw [black] (227,120) circle (2pt);
			\filldraw [black] (227,165) circle (2pt);
			\filldraw [black] (227,189) circle (2pt);
			\filldraw [black] (227,213) circle (2pt);
			\filldraw [black] (134,134) circle (2pt);
			\filldraw [black] (446,134) circle (2pt);
			\draw (134,134).. controls (163,99) and (182,81) .. (227,59);
			\draw (134,134).. controls (175,104.33) and (187,102) .. (227,83);
			\draw (134,134).. controls (175,117) and (214,120) .. (227,120);
			\draw (134,134).. controls (165,147) and (197,162) .. (227,165);
			\draw (134,134).. controls (157,159) and (203,184) .. (227,189);
			\draw (134,134).. controls (151,161) and (189,203) .. (227,213);
			
			\draw [shift={(182.17,85.91)}, rotate = 142.4] [fill={rgb, 255:red, 0; green, 0; blue, 0 }  ][line width=0.08]  [draw opacity=0] (12,-3) -- (0,0) -- (12,3) -- cycle    ;
			\draw [shift={(186.39,102.04)}, rotate = 153.82] [fill={rgb, 255:red, 0; green, 0; blue, 0 }  ][line width=0.08]  [draw opacity=0] (12,-3) -- (0,0) -- (12,3) -- cycle    ;
			\draw [shift={(188.28,120.5)}, rotate = 172] [fill={rgb, 255:red, 0; green, 0; blue, 0 }  ][line width=0.08]  [draw opacity=0] (12,-3) -- (0,0) -- (12,3) -- cycle    ;
			\draw [shift={(187.88,155.67)}, rotate = 199.37] [fill={rgb, 255:red, 0; green, 0; blue, 0 }  ][line width=0.08]  [draw opacity=0] (12,-3) -- (0,0) -- (12,3) -- cycle    ;
			\draw [shift={(184.76,171.44)}, rotate = 209.28] [fill={rgb, 255:red, 0; green, 0; blue, 0 }  ][line width=0.08]  [draw opacity=0] (12,-3) -- (0,0) -- (12,3) -- cycle    ;
			\draw [shift={(180.62,186.48)}, rotate = 220.35] [fill={rgb, 255:red, 0; green, 0; blue, 0 }  ][line width=0.08]  [draw opacity=0] (12,-3) -- (0,0) -- (12,3) -- cycle    ;

			\draw (343,161).. controls (363,158) and (406,152).. (446,134);
			\draw (343,185).. controls (367,177) and (410,163) .. (446,134);
			\draw (343,208).. controls (370,199) and (417,170) .. (446,134);
			\draw (343,232).. controls (389,209) and (434,171).. (446,134);
			\draw (343,28).. controls (397,46) and (440,107)  .. (446,134);
			\draw (343,62).. controls (391,75) and (424,115) .. (446,134);
			\draw (343,97).. controls (382,109) and (422,123) .. (446,134);

			\draw [shift={(410.85,75.91)}, rotate = 227.62] [fill={rgb, 255:red, 0; green, 0; blue, 0 }  ][line width=0.08]  [draw opacity=0] (12,-3) -- (0,0) -- (12,3) -- cycle    ;
			\draw [shift={(404.8,96.2)}, rotate = 222] [fill={rgb, 255:red, 0; green, 0; blue, 0 }  ][line width=0.08]  [draw opacity=0] (12,-3) -- (0,0) -- (12,3) -- cycle    ;
			\draw [shift={(402.29,116.85)}, rotate = 199.81] [fill={rgb, 255:red, 0; green, 0; blue, 0 }  ][line width=0.08]  [draw opacity=0] (12,-3) -- (0,0) -- (12,3) -- cycle    ;
			\draw [shift={(402.79,149.46)}, rotate = 165.74] [fill={rgb, 255:red, 0; green, 0; blue, 0 }  ][line width=0.08]  [draw opacity=0] (12,-3) -- (0,0) -- (12,3) -- cycle    ;
			\draw [shift={(403.33,161.29)}, rotate = 154.11] [fill={rgb, 255:red, 0; green, 0; blue, 0 }  ][line width=0.08]  [draw opacity=0] (12,-3) -- (0,0) -- (12,3) -- cycle    ;
			\draw [shift={(404.96,173.25)}, rotate = 143.53] [fill={rgb, 255:red, 0; green, 0; blue, 0 }  ][line width=0.08]  [draw opacity=0] (12,-3) -- (0,0) -- (12,3) -- cycle    ;
			\draw [shift={(408.97,186.89)}, rotate = 137.48] [fill={rgb, 255:red, 0; green, 0; blue, 0 }  ][line width=0.08]  [draw opacity=0] (12,-3) -- (0,0) -- (12,3) -- cycle    ;
			
			\draw (227,59).. controls (266,39) and (323,31) .. (343,28);
			\draw (227,83).. controls (277,103) and (326,98) .. (343,97);
			\draw (227,120).. controls (265,90) and (303.67,75) .. (343,62);
			
			\draw [shift={(291.69,37.36)}, rotate = 166.73] [fill={rgb, 255:red, 0; green, 0; blue, 0 }  ][line width=0.08]  [draw opacity=0] (12,-3) -- (0,0) -- (12,3) -- cycle    ;
			\draw [shift={(295.19,80)}, rotate = 158] [fill={rgb, 255:red, 0; green, 0; blue, 0 }  ][line width=0.08]  [draw opacity=0] (12,-3) -- (0,0) -- (12,3) -- cycle    ;
			\draw [shift={(292.68,97.67)}, rotate = 184.95] [fill={rgb, 255:red, 0; green, 0; blue, 0 }  ][line width=0.08]  [draw opacity=0] (12,-3) -- (0,0) -- (12,3) -- cycle    ;
			
			\draw    (227,213) .. controls (250,201.67) and (255,197) .. (267.67,188.33) ;
			\draw    (227,213) .. controls (246.33,212.33) and (266.33,203.67) .. (279,194.33) ;
			\draw    (227,213) .. controls (243.67,195.67) and (255.67,186.33) .. (260.33,181) ;
			
			\draw    (299.67,219.67) .. controls (311,224.33) and (328.33,231.67) .. (343,232) ;
			\draw    (297,228.33) .. controls (309,233) and (333.67,239.67) .. (343,232) ;
			\draw    (304.33,203) .. controls (309.67,209) and (324.33,221.67) .. (343,232) ;
			\draw [shift={(328.22,229.83)}, rotate = 195.74] [fill={rgb, 255:red, 0; green, 0; blue, 0 }  ][line width=0.08]  [draw opacity=0] (12,-3) -- (0,0) -- (12,3) -- cycle    ;
			\draw [shift={(327.49,235.51)}, rotate = 185.93] [fill={rgb, 255:red, 0; green, 0; blue, 0 }  ][line width=0.08]  [draw opacity=0] (12,-3) -- (0,0) -- (12,3) -- cycle    ;
			\draw [shift={(329,223.26)}, rotate = 214.73] [fill={rgb, 255:red, 0; green, 0; blue, 0 }  ][line width=0.08]  [draw opacity=0] (12,-3) -- (0,0) -- (12,3) -- cycle    ;

			\node at(343,73)  [align=left] {$\displaystyle \vdots $};
			\node at(343,125) [align=left] {$\displaystyle \vdots $};
			\node at(343,38) [align=left] {$\displaystyle \vdots $};
			\node at(227,138) [align=left] {$\displaystyle \vdots $};
			\node at(227,98) [align=left] {$\displaystyle \vdots $};
			\node at(120,128)  {$s$};
			\node at(455,128) {$t$};
			\node at(215,50) {$\mathscr{A}_1$};
			\node at(215,73) {$\mathscr{A}_2$};
			\node at(215,110) {$\mathscr{A}_i$};
			\node at(215,220) {$\mathscr{A}_M$};
			
			\node at(360,50) {$S^{(j)}$};
			\node at(360,90) {$S^{(k)}$};

\end{tikzpicture}
	\end{figure}

Let $s$ be the source and $t$ be the sink. Each partition $\mathscr{A}_i$ defines a vertex and we add an arc from the source $s$ to each $\mathscr{A}_i$.
For each subset $S\subseteq[\ell]$, in the network we create vertices $S^{(j)}$ for all $j\in \mathcal{L}$ with $j\ge |S|$, where $S^{(j)}$ stands for the set $S$ with potential value $j$.
We add an arc from each $S^{(j)}$ to the sink $t$.
If $S$ occurs as a subset with potential value $j$ in the partition $\mathscr{A}_i$, then we add to the network an arc from $\mathscr{A}_i$ to $S^{(j)}$. Next we define the capacity function $\kappa$ as follows:
	$$\kappa(s,\mathscr{A}_i)=1,~~~~\kappa(\mathscr{A}_i,S^{(j)})=+\infty,~~\mbox{ and }~~\kappa(S^{(j)},t)=\binom{n-\ell-1}{j-1-|S|}.$$

Then we define a flow $f$ on this network as follows:
	$$f(s,\mathscr{A}_i)=1,~~~~f(\mathscr{A}_i,S^{(j)})=\frac{j-|S|}{n-\ell},~~\mbox{ and }~~f(S^{(j)},t)=\binom{n-\ell-1}{j-1-|S|}.$$
Let us check that $f$ is indeed a flow. It is easy to check that $0\le f(a) \le \kappa(a)$ for every arc $a$ in this network. To see that $f$ satisfies the conservation of flow, we consider the vertices $\mathscr{A}_i$ and $S^{(j)}$ separately:
	\begin{enumerate}
		\item For each $\vec{\lambda}$-partition $\mathscr{A}_i$ of $[\ell]$ with $\vec{\lambda}=(\lambda_1,\lambda_2,\dots,\lambda_k)$, the total value of flow leaving $\mathscr{A}_i$ is
		\begin{align*}
			\sum\limits_{S\in \mathscr{A}_i}\frac{j-|S|}{n-\ell}=\frac{\sum_{j\in \mathcal{L}}^{n}j\cdot\lambda_j-\sum\limits_{S\in \mathscr{A}_i}|S|}{n-\ell}=\frac{n-\ell}{n-\ell}=1=f(s,\mathscr{A}_i);
		\end{align*}
		\item For each $S^{(j)}$, by the inductive hypothesis, it appears in the partitions $\{\mathscr{A}_1, \cdots, \mathscr{A}_M\}$ for exactly $\binom{n-\ell}{j-|S|}$ times.
        So we have the total value of flow entering $S^{(j)}$ is $$\binom{n-\ell}{j-|S|}\cdot\frac{j-|S|}{n-\ell}=\binom{n-\ell-1}{j-1-|S|}=f(S^{(j)},t).$$
	\end{enumerate}
	Since $f(a)=\kappa(a)$ for all the edges $a$ leaving the source $s$, the so-defined $f$ must be a maximum flow. By Theorem \ref{DF}, there is an integral flow $f^*$ of the same maximum value. Therefore for each $\mathscr{A}_i$, we have $f^*(s,\mathscr{A}_i)=1$, and consequently, there is a unique arc $\mathscr{A}_i \to S^{(j)}$ with $f^*(\mathscr{A}_i,S^{(j)})=1$.
	As for each vertex $S^{(j)}$, we have $f^*(S^{(j)},t)=\binom{n-\ell-1}{j-1-|S|}$,	and by the conservation of flow, there are exactly $\binom{n-\ell-1}{j-1-|S|}$ arcs $a$ directed to $S^{(j)}$ with $f^*(a)=1$.
	Let us pay some attention to vertices $S^{(j)}$ with $j=|S|$ that
	as $f^*(S^{(j)},t)=\binom{n-\ell-1}{j-1-|S|}=0$,
	each arc $a$ directed to $S^{(j)}$ must have $f^*(a)=0$, hence it is impossible for any of the vertices $\mathscr{A}_i$ to have the unique arc $\mathscr{A}_i \to S^{(j)}$ with $f^*(\mathscr{A}_i,S^{(j)})=1$.

	Finally, we use $f^*$ to construct a desired collection $\mathscr{A}_1',\mathscr{A}_2',\dots, \mathscr{A}_M'$ of partitions of $[\ell+1]$. As mentioned above, every $\mathscr{A}_i$ has a unique $S^{(j)}$ such that $f^*(\mathscr{A}_i, S^{(j)}))=1$, where $j>|S|$ by the above discussion. Let $S'=S\cup \{\ell+1\}$, and update $\mathscr{A}_i$ by replacing $S$ with $S'$ and assigning to $S'$ the same potential value $j$.
	Note that we have $j\ge |S'|$.
	By definition, the new partition $\mathscr{A}_i'$ is still a $\vec{\lambda}$-partition of $[\ell+1]$.
	So the first and second properties for the new partitions $\mathscr{A}'_1, \cdots, \mathscr{A}'_M$ are satisfied.
	Since there are $\binom{n-\ell-1}{j-1-|S|}$ many arcs directed to $S^{(j)}$, the new set $S'=S\cup \{\ell+1\}$ with potential value $j$ (i.e. $S'^{(j)}$) is contained in exactly $\binom{n-\ell-1}{j-1-|S|}=\binom{n-(\ell+1)}{j-|S'|}$ new partitions $\mathscr{A}_i$'. For those $S \subset [\ell+1]$ not containing the element $\ell$, by induction they occur
	\begin{align*}
		&\binom{n-\ell}{j-|S|}-\binom{n-\ell-1}{j-1-|S|}=\binom{n-(\ell+1)}{j-|S|}
	\end{align*}
	times with potential value $j$ in the new partitions $\mathscr{A}'_1, \cdots, \mathscr{A}'_M$.
	This proves the third property for the new partitions $\mathscr{A}'_1, \cdots, \mathscr{A}'_M$.
	Hence the statement holds for $\ell+1$ and the proof is completed. \qed

\section{Finding non-negative integer solutions}\label{sec_solve}
After establishing Theorem \ref{thm_reduction}, to prove Theorem \ref{thm_main}, it remains to determine for which $n, k$, the system (for $\mathcal{L}=[k]$) of linear equations \eqref{eq1.1} has a non-negative integer solution.
For convenience, in this section the system \eqref{eq1.1} without specified $\mathcal{L}$ (or $A$ and $\vec{b}$ without subscripts) always means the case $\mathcal{L}=[k]$.
In the next two subsections we discuss the necessary and sufficient conditions respectively, and we prove Theorems \ref{thm_main} and \ref{thm_equiv} in the last subsection.

\subsection{Necessary condition}

The following lemma shows that it is necessary for $n$ to  come from certain congruence classes modulo $k$.

\begin{lemma}\label{th1.3}
	For any $n, k$ satisfying $2\le k< \frac{n}{2}$, if $n \not\equiv0,-1 \pmod k$, then the system $\eqref{eq1.1}$ does not have a non-negative real solution.
\end{lemma}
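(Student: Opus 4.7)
The plan is to invoke Farkas' Lemma (LP duality). The system $A^\top \vec x = \vec b$, $\vec x \ge 0$ has no non-negative real solution if and only if there exists a vector $\vec y = (y_1, \ldots, y_k) \in \mathbb{R}^k$ satisfying
\[
\sum_{j=1}^k \lambda_j y_j \ge 0 \text{ for every } (n,k)\text{-type } \vec\lambda, \qquad \sum_{i=1}^k \binom{n}{i} y_i < 0.
\]
So it suffices to exhibit such a witness. Write $n = qk + r$ with $1 \le r \le k-2$ and $q \ge 2$ (the latter forced by $k < n/2$).

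My first attempt is the linear witness $y_j := n/(q+1) - j$. Since every type satisfies $\sum_j j\lambda_j = n$, a direct calculation gives
\[
\sum_{j=1}^k \lambda_j y_j \;=\; \frac{n}{q+1}\bigl(|\vec\lambda| - (q+1)\bigr),
\]
which is non-negative because every $(n,k)$-type obeys $|\vec\lambda| \ge \lceil n/k \rceil = q+1$, with equality attained by the canonical extremal type $\lambda_k = q,\ \lambda_r = 1$. Setting $N := \sum_{i=1}^k \binom{n}{i}$ and $M := \sum_{i=1}^k \binom{n-1}{i-1}$, the identity $i\binom{n}{i} = n\binom{n-1}{i-1}$ yields $\vec b^\top \vec y = \tfrac{n}{q+1}(N - (q+1)M)$. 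Using Pascal's identity one checks $N = 2M + \binom{n-1}{k} - 1$, so the Farkas condition for this witness reduces to the single binomial inequality $\binom{n-1}{k} \le (q-1)\sum_{j=0}^{k-1}\binom{n-1}{j}$.

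The main obstacle is that this inequality is not automatic across the whole range $1 \le r \le k-2$, $k<n/2$: it can fail on the boundary (for instance when $q = 2$ and $r$ is close to $k-2$, so that $\binom{n-1}{k}$ grows roughly to match $M$). For those borderline cases, the linear witness needs a non-linear replacement that also exploits the bound $(k-1)\lambda_{k-1} + k\lambda_k \le n$; a natural candidate is the piecewise witness
\[
y_j \;=\; q \text{ for } 1 \le j \le k-2,\qquad y_{k-1} \;=\; b,\qquad y_k \;=\; -1,
\]
where $b \in \mathbb R$ is chosen as the maximum of the lower bounds forced by the level-$s$ constraints (with $s := q - \lambda_k$), in particular the all-heavy constraint coming from the integer solutions of $(k-1)\lambda_{k-1} + k\lambda_k = n$ (which exist precisely when $s + r \equiv 0 \pmod{k-1}$ for some $0 \le s \le q$).

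Verifying $\sum_j \lambda_j y_j \ge 0$ for every type then reduces, by parametrizing each type by the pair $(\lambda_{k-1}, \lambda_k)$ and bounding $|\vec\lambda_{\le k-2}|$ from below by $\lceil (n - (k-1)\lambda_{k-1} - k\lambda_k)/(k-2)\rceil$, to a finite case check indexed by these pairs. Finally, $\vec b^\top \vec y < 0$ reduces to the binomial inequality $q\sum_{j=1}^{k-2}\binom{n}{j} + b\binom{n}{k-1} < \binom{n}{k}$, whose proof uses the hypotheses $1 \le r \le k-2$ and $k < n/2$ via Pascal's identity together with repeated application of the ratio $\binom{n}{i+1}/\binom{n}{i} = (n-i)/(i+1)$. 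The hard part of the argument is controlling this final binomial inequality uniformly in the regime where the simple linear witness barely fails; the rest is a routine but careful case analysis.
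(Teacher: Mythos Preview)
Your overall framework---Farkas' Lemma plus an explicit separating vector---is exactly the paper's strategy, and your linear witness $y_j = n/(q+1)-j$ is a clean observation (the paper does not use it). You have also correctly located where the difficulty lies.

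However, the proposal does not actually close the gap, and your proposed second witness cannot close it as stated. Take the regime $q=2$, $r=k-2$, so $n=3k-2$. With $y_j=q=2$ for $j\le k-2$ and $y_k=-1$, the binding type is $\lambda_k=1,\ \lambda_{k-1}=2$, which forces $b\ge 1/2$; with $b=1/2$ the Farkas inequality $\vec b^{\,T}\vec y<0$ becomes
\[
2\sum_{i=1}^{k-2}\binom{3k-2}{i} \;<\; \frac{3k-2}{k-1}\,\binom{3k-2}{k-2}.
\]
For $k=10$ this holds with almost no room ($S:=\sum_i\binom{28}{i}/\binom{28}{8}\approx 1.54$ versus the bound $28/18\approx 1.56$), but for $k=20$ one computes $S\approx 1.71$ while the bound is $58/38\approx 1.53$, and the inequality fails. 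The issue is structural: setting every $y_j=q$ for $j\le k-2$ makes the positive contribution from the lower levels roughly twice as large as it needs to be, and no admissible choice of $b$ can compensate once $k$ is moderately large and $q=2$.

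The paper's fix is precisely to halve these coordinates: for $q\ge 3$ it uses $y_j\approx q/2$ away from position $r$ (keeping $y_r=q$ to handle the extremal type $\lambda_k=q,\lambda_r=1$), and even then the verification of $\vec b^{\,T}\vec y<0$ requires a nontrivial case split over $q\in\{3,4,5,\ge 6\}$. For $q=2$ the paper abandons this shape entirely and builds a different witness supported on $\{1,\dots,\lfloor(k+r)/2\rfloor,k\}$, tuned to the two-part decompositions of $k+r$. So the ``routine but careful case analysis'' you defer is in fact the substance of the lemma; your sketch identifies the right tool but does not supply a witness that works across the whole range, and the shape you propose demonstrably fails in the hardest subcase.
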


It turns out that unlike Baranyai's Theorem, the congruence condition in Lemma \ref{th1.3} alone is not enough to guarantee a $1$-factorization. For example, one could show that $K_{18}^{\le 6}$ is not $1$-factorable even though $18 \equiv  0\pmod 6$. More generally, if $n$ is not very large compared to $k$, even if it is from those congruence classes, it is still possible that the system \eqref{eq1.1}  does not have a non-negative integer solution. In these cases, there are not even non-negative real solutions.

\begin{lemma}\label{th1.4}
	For every positive integer $k \ge 2$, \\
(i) Suppose $n=j\cdot k+k$, where $j$ is a non-negative integer. If $2\le j \le k-4$, then the system $\eqref{eq1.1}$ has no non-negative solution.\\
(ii) Suppose $n=j\cdot k+k-1$, where $j$ is a non-negative integer. If $2\le j \le \lceil \frac{k}{2}\rceil -3$, then the system $\eqref{eq1.1}$ has no non-negative solution.
\end{lemma}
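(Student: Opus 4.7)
The plan is to prove Lemma~\ref{th1.4} by exhibiting, in each case, a dual certificate via Farkas' Lemma: the system~\eqref{eq1.1} has no non-negative real (hence integer) solution if and only if there exists $y=(y_1,\dots,y_k)\in\mathbb{R}^k$ such that $\sum_i y_i\lambda_i\ge 0$ for every $(n,k)$-type $\vec{\lambda}$ and $\sum_i y_i\binom{n}{i}<0$. Thus the whole argument reduces to building such a $y$ explicitly and checking two inequalities.

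For case (i), where $n=(j+1)k$, I would set $c_i:=\max(0,i-j-1)$ for $1\le i\le k-1$, $c_k:=k-j-2$, $T:=(j+1)(k-j-2)$, and $y_i:=i-\tfrac{k}{k-j-2}\,c_i$. Explicitly, $y_i=i$ for $i\le j+1$, $y_{k-1}=-1$, $y_k=0$, and the middle coordinates interpolate between these. The condition $\sum_i y_i\lambda_i\ge 0$ is equivalent to the combinatorial inequality $\sum_i c_i\lambda_i\le T$. Letting $l:=\sum_{i\ge j+2}\lambda_i$ and $L:=\sum_{i\ge j+2}i\lambda_i$, a direct computation gives $\sum_i c_i\lambda_i=L-(j+1)l-\lambda_k$, and I would split into two regimes: if $l\ge j+2$, the bound $L\le n$ yields $\sum_i c_i\lambda_i\le n-(j+1)(j+2)=T$; if $l\le j+1$, then since at most $\lambda_k$ of the large parts have size $k$ and the rest have size $\le k-1$, we get $L\le k\lambda_k+(k-1)(l-\lambda_k)$, hence $\sum_i c_i\lambda_i\le(k-j-2)l\le T$.

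For the second inequality $\sum_i y_i\binom{n}{i}<0$, I would rewrite it as $\sum_i c_i\binom{n}{i}>TM$ where $M=\sum_{i=0}^{k-1}\binom{n-1}{i}$. Using $i\binom{n}{i}=n\binom{n-1}{i-1}$ together with the useful identity $\binom{n}{k}=(j+1)\binom{n-1}{k-1}$ (valid because $n=(j+1)k$), the left-hand side unfolds into a truncated binomial sum, and the inequality reduces to an explicit comparison between partial binomial sums. This closed-form inequality can then be verified directly, and the critical point is that it holds precisely when $2\le j\le k-4$.

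For case (ii), $n=(j+1)k-1$, I would mirror the above with modifications reflecting two features of this case: (a) no all-$k$ type exists, and the unique minimum-parts type is $\lambda_k=j,\lambda_{k-1}=1$, so the extremal structure used to define $c_i$ shifts accordingly; and (b) the key identity takes the form $\binom{n}{k}=j\binom{n}{k-1}$ (since $(n-k+1)/k=j$ when $n=(j+1)k-1$), so the resulting binomial inequality is tighter, and this is exactly what forces the narrower range $2\le j\le\lceil k/2\rceil-3$. The main obstacle in both cases is the verification of the binomial inequality in the second step: the constructions and the combinatorial bound in the first step are relatively routine, but the strict inequality for $\sum_i y_i\binom{n}{i}$ is tight, and the stated ranges are precisely where it flips sign---consistent with Theorem~\ref{thm_main} producing feasible solutions beyond them.
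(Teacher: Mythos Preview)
Your overall strategy---Farkas' Lemma with an explicit dual certificate---matches the paper's, and your treatment of the feasibility side in case~(i) is actually cleaner than the paper's. The paper uses the piecewise-constant vector
\[
\vec{y}^T=(\underbrace{j+1,\dots,j+1}_{j+1},\underbrace{j/2,\dots,j/2}_{k-j-3},-1,0),
\]
whereas your choice $y_i=i-\tfrac{k}{k-j-2}c_i$ reduces $A\vec{y}\ge 0$ to the single inequality $\sum_i c_i\lambda_i\le T$, which you dispatch in two lines. That is a genuine simplification.

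The gap is in the second step. You reduce $\vec{b}^T\vec{y}<0$ to $\sum_i c_i\binom{n}{i}>TM$ and then assert that this ``can be verified directly'' and ``holds precisely when $2\le j\le k-4$''. But this inequality \emph{is} the lemma: it is where all the work lies, and it is not routine. In the paper, the analogous step occupies the bulk of the proof, requiring a recursive expansion of $\binom{n}{k-1}$ in terms of lower binomials, a case split over $j\in\{2,3,4\}$ versus $j\ge 5$, and explicit numerical checks at several boundary pairs $(j,k)$ (e.g.\ $(2,6)$, $(3,7)$, $(3,8)$, $(4,8)$). Your certificate is different from the paper's, so your binomial inequality is not literally the same one, and there is no reason to expect it to be easier; you would still need to carry out an analysis of comparable length. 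Writing ``the stated ranges are precisely where it flips sign'' is a description of what must be shown, not a proof of it.

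Case~(ii) has the same gap but more severely: you give no concrete certificate at all, only the intention to ``mirror the above with modifications''. The paper's certificate here,
\[
\vec{y}^T=(\underbrace{j+1,\dots,j+1}_{2j+1},\underbrace{j/2,\dots,j/2}_{k-2j-4},-1,\,j,\,-1),
\]
has a non-monotone tail (the $j$ in position $k-1$) reflecting the special role of the type $(\lambda_{k-1},\lambda_k)=(1,j)$; a naive extension of your $c_i$ will not capture this, and you have not said what replaces it. Until you write down the vector and verify both Farkas conditions---including the binomial inequality in the full range $2\le j\le\lceil k/2\rceil-3$---the proposal for~(ii) is only a sketch of intent.
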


The proofs of Lemma \ref{th1.3} and \ref{th1.4} use Farkas' Lemma. In order to show that  $A^T \vec{x} = \vec{b}$ has no non-negative real solution, we will construct a hyperplane separating the convex cone formed by the column vectors of the matrix $A^T$ and the vector $\vec{b}$. Here we say a vector $\vec{x} \ge 0$ if each of its coordinates is non-negative.

\begin{lemma}[Farkas \cite{Farkas}]\label{lemma_farkas}
	Let $P \in \mathbb{R} ^{m\times n}$ and $\vec{b} \in \mathbb {R}^{n}$. Then exactly one of the following two assertions is true:
	\begin{enumerate}
		\item There exists a vector ${\displaystyle \vec {x} \in \mathbb {R} ^{m}}$ such that $P^T \vec{x} =\vec{b} $  and $\vec{x} \geq \vec{0}$.
		\item There exists a vector $\vec{y} \in \mathbb {R}^{n}$ such that $ P\vec{y} \geq \vec{0}$ and $\vec{b}^T\vec{y} <0$.
	\end{enumerate}
\end{lemma}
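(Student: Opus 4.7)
The plan is to prove Farkas' Lemma by (a) verifying that the two alternatives are mutually exclusive and (b) producing one of them via strict separation of a closed convex cone from an external point. For (a), suppose both alternatives held, with $P^T \vec{x} = \vec{b}$, $\vec{x} \geq \vec{0}$ and $P\vec{y} \geq \vec{0}$, $\vec{b}^T \vec{y} < 0$. Then
\[
\vec{b}^T \vec{y} \;=\; (P^T \vec{x})^T \vec{y} \;=\; \vec{x}^T (P\vec{y}) \;\geq\; 0,
\]
contradicting $\vec{b}^T \vec{y} < 0$. So at most one of (1) and (2) holds.

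For the substantive direction, assume (1) fails and introduce the convex cone $C := \{P^T \vec{x} : \vec{x} \in \mathbb{R}_{\geq 0}^{m}\} \subseteq \mathbb{R}^n$, the non-negative conic hull of the rows of $P$. Failure of (1) is precisely the statement $\vec{b} \notin C$, and the goal is to produce $\vec{y}$ strictly separating $\vec{b}$ from $C$. The crucial preparatory step is to show that $C$ is closed. I would do this through a Carath\'eodory-type reduction: every element of $C$ can be expressed as a non-negative combination of a \emph{linearly independent} subset of the generators, so $C = \bigcup_I C_I$, where $I$ ranges over the finitely many linearly independent subsets of the rows of $P$ and each $C_I$ is the image of the closed orthant $\mathbb{R}_{\geq 0}^{|I|}$ under an injective linear map. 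Each $C_I$ is therefore closed, and a finite union of closed sets is closed.

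With $C$ closed and $\vec{b} \notin C$, the strict separating hyperplane theorem yields $\vec{y} \in \mathbb{R}^n$ and $\alpha \in \mathbb{R}$ with $\vec{y}^T \vec{c} \geq \alpha$ for all $\vec{c} \in C$ and $\vec{y}^T \vec{b} < \alpha$. Since $\vec{0} \in C$, we have $\alpha \leq 0$, so $\vec{b}^T \vec{y} < 0$. Because $C$ is a cone, if some $\vec{c}_0 \in C$ had $\vec{y}^T \vec{c}_0 < 0$ then $t\vec{c}_0 \in C$ with $\vec{y}^T(t\vec{c}_0) \to -\infty$ as $t \to \infty$, contradicting the lower bound $\alpha$. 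Hence $\vec{y}^T \vec{c} \geq 0$ for every $\vec{c} \in C$; specializing $\vec{c} = P^T \vec{e}_i$ for each standard basis vector $\vec{e}_i \in \mathbb{R}^m$ gives $(P\vec{y})_i \geq 0$, so $P\vec{y} \geq \vec{0}$, delivering (2). The main obstacle is the closedness of $C$: the remainder is routine convex geometry, but without closedness the separation theorem yields only a non-strict inequality and the essential bound $\vec{b}^T \vec{y} < 0$ cannot be secured.
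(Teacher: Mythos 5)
Your argument is correct: the easy direction follows from $\vec{b}^T\vec{y}=\vec{x}^T(P\vec{y})\ge 0$, and for the substantive direction you rightly identify closedness of the cone $C=\{P^T\vec{x}:\vec{x}\ge \vec{0}\}$ as the crux, handling it by the conic Carath\'eodory reduction to images of closed orthants under injective linear maps before applying strict separation and the scaling argument on the cone. Note, however, that the paper does not prove Lemma \ref{lemma_farkas} at all --- it is quoted as a classical result of Farkas \cite{Farkas} and used as a black box --- so there is no proof in the paper to compare against; what you have written is the standard textbook proof and would serve as a self-contained justification of the cited lemma.
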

Below we give proofs to both Lemmas \ref{th1.3} and \ref{th1.4}.

\medskip
\noindent {\bf Proof of Lemma \ref{th1.3}}:
Let $n=j\cdot k+r$, $0<r<k-1$. We first consider the case when $j\ge3$. Let  
$$\vec{y}^T = (\underbrace{\frac{j}{2}, \cdots, \frac{j}{2}}_{r-1}, j,   \underbrace{\frac{j-1}{2}, \cdots, \frac{j-1}{2}}_{k-r-1}, -1).$$
By Lemma \ref{lemma_farkas} we just need to prove $A\vec{y}\ge \vec{0}$ and $\vec{b}^T\vec{y} <0$.

Take an arbitrary row vector of $A$. By definition we know that it is of the form $\vec{\lambda}=(\lambda_1, \cdots, \lambda_k)$ such that $\sum_{i=1}^k i \cdot \lambda_i = n$ and all $\lambda_i$'s are non-negative integers. Since $n=jk+r$, we have $\lambda_{k}\le j$. Suppose $\lambda_k=j$, then $\sum_{i=1}^{k-1} i \cdot \lambda_i=n-kj=r>0$ and thus for some index $1 \le s \le r$, $\lambda_s$ must be strictly positive. Either we have at least two such $\lambda_s$ to be positive, or $\lambda_r=1$. In either case, $\sum_{i=1}^r \lambda_i y_i \ge j$. This already gives
$$\sum_{i=1}^{k} \lambda_i y_i \ge \left(\sum_{i=1}^r \lambda_i y_i\right) + \lambda_k y_k \ge j-j = 0.$$

Now we may assume that $\lambda_k \le j-1$. Similar as before, if at least two of $\lambda_1, \cdots \lambda_{r-1}$, or $\lambda_r$ itself is strictly positive, then $\sum_{i=1}^k \lambda_i y_i$ is non-negative. Otherwise suppose $\lambda_r=0$ and $\lambda_1+ \cdots + \lambda_{r-1} \le 1$, then
$$\sum_{i \neq r, 1 \le i \le k-1} i \cdot \lambda_i = n - k \lambda_k \ge jk+r -(j-1)k = k+r.$$
Since
$$\sum_{i \neq r, 1 \le i \le k-1} i \cdot \lambda_i \le (k-1)\sum_{i \neq r, 1 \le i \le k-1}\lambda_i.$$
We immediately have $\sum_{i \neq r, 1 \le i \le k-1} \lambda_i \ge 2$. Therefore we also have
$$\sum_{i=1}^k \lambda_i y_i \ge \frac{j-1}{2}\cdot \left(\sum_{i \neq r, 1 \le i \le k-1} \lambda_i\right) - \lambda_k \ge (j-1)-(j-1)=0.$$

Next we prove $\vec{y}^T\vec{b}<0$. Recall that $b_i=\binom{n}{i}$. So it suffices to verify that for $1 \le r \le k-2$ and $n=jk+r$ with $j \ge 3$,

\begin{equation}\label{ineq}
	\sum_{i=1}^{r-1} \frac{j}{2} \binom{n}{i}+ j \binom{n}{r}+\sum_{i=r+1}^{k-1} \frac{j-1}{2} \binom{n}{i}<\binom{n}{k}.
\end{equation}

Note that $$\binom{n}{k-i+1}=\frac{j-1}{2}\binom{n}{k-i}+\frac{j-1}{2}\binom{n}{k-i}+\frac{r+1+j (i-1)}{k-(i-1)}\binom{n}{k-i}.$$
By substituting one of the $\binom{n}{k-i}$ by the above identity for $i+1$ and repeat this process, we obtain
\begin{equation}\label{eq2.1}
	\binom{n}{k}=\sum_{i=1}^{k-1}\left(\frac{j-1}{2}\right)^i\binom{n}{k-i}+\left(\frac{j-1}{2}\right)^{k-1}\binom{n}{1}+\sum_{i=1}^{k-1}\left(\frac{j-1}{2}\right)^{i-1}\frac{r+1+j (i-1)}{k-(i-1)}\binom{n}{k-i}.
\end{equation}
We compare the coefficients of $\binom{n}{k-i}$ in this expression with the left hand side of \eqref{ineq}. Note that $1\le r \le k-2$, we discuss the following cases according to the value of $j$.
\begin{enumerate}
	\item For $j \ge 6$, using that for $i \ge k-r \ge 2$, $(\frac{j-1}{2})^i \ge (\frac{j-1}{2})^2>j$,
	\begin{align*}
		\binom{n}{k}>\sum_{i=1}^{k-1}\left(\frac{j-1}{2}\right)^i\binom{n}{k-i}>\sum_{i=1}^{k-r-1}\frac{j-1}{2}\binom{n}{k-i}+\sum_{i=k-r}^{k-1}j\binom{n}{k-i}.
	\end{align*}
	\item For $j=5$, note that $(\frac{j-1}{2})^3-j>0$, we could establish the same inequality when $r\le k-3$. It suffices to check the case $r=k-2$ and compare the coefficients of $\binom{n}{k-2}$. Here we also involve the last sum on the right hand side of \eqref{eq2.1}. Note that in \eqref{eq2.1} the coefficients of $\binom{n}{k-2}$ add up to be $2^2+2\cdot\frac{k-2+1+5(2-1)}{k-(2-1)}>5=j$. It completes the proof of the $j=5$ case.
	
	\item For $j=4$, since $(\frac{j-1}{2})^4-j>0$, like in Case 2, it suffices to check the cases $r=k-2$ and $r=k-3$. For $r=k-3$, the coefficient of $\binom{n}{k-3}$ from \eqref{eq2.1} is $(\frac{j-1}{2})^3+(\frac{j-1}{2})^2 \cdot \frac{(k-3)+1+j (3-1)}{k-(3-1)}$, which is greater than $j$. For $r=k-2$, the coefficient of $\binom{n}{k-3}$ from \eqref{eq2.1} is $(\frac{j-1}{2})^3+(\frac{j-1}{2})^2 \cdot\frac{(k-2)+1+j (3-1)}{k-(3-1)}$, greater than $j$. The coefficient of $\binom{n}{k-2}$ from \eqref{eq2.1} is $(\frac{j-1}{2})^2+(\frac{j-1}{2}) \cdot\frac{(k-2)+1+j (2-1)}{k-(2-1)}$, combined with the surplus term $\frac{(k-2)+1}{k} \cdot \binom{n}{k-1} \ge \frac{(k-2)+1}{k} \cdot  \binom{n}{k-2}$, which is not hard to check that $\frac{9}{4}+\frac{3}{2} \cdot \frac{k+3}{k-1} + \frac{k-1}{k}$ is greater than $j=4$ for all $k$.

	\item When $j=3$, we compare the identity \eqref{eq2.1} with inequality \eqref{ineq}, note that what we need to prove is
	\begin{equation}\label{ineq_new}
		\binom{n}{1}+\sum_{i=1}^{k-1}\frac{r+1+3 (i-1)}{k-(i-1)}\binom{n}{k-i}>2\binom{n}{r}+\sum_{i=k-r+1}^{k-1}\frac{1}{2}\binom{n}{k-i}.
	\end{equation}
	Note that $\binom{n}{k-i}/\binom{n}{k-i-1}=(n-k+i+1)/(k-i)>2$, therefore the right hand side is at most $2 \binom{n}{r} + \binom{n}{r-1}$, while the left hand side is at least
	$$\frac{r+1}{k}\binom{n}{k-1} + \frac{r+4}{k-1}\binom{n}{k-2}+\frac{r+7}{k-2}\binom{n}{k-3}.$$
	Note that
	$$\frac{\binom{n}{k-1}}{\binom{n}{r}}=\frac{(n-k+2) \cdots (n-r)}{(r+1) \cdots (k-1)} \ge  \frac{n-k+2}{r+1} \ge \frac{k}{r+1}.$$
	For $r=k-2$, $\frac{r+4}{k-1}\binom{n}{k-2}=\frac{k+2}{k-1}\binom{n}{k-2}>\binom{n}{k-2}$, and for $r \le k-3$,
	$$\frac{\binom{n}{k-2}}{\binom{n}{r}}=\frac{(n-k+3) \cdots (n-r)}{(r+1) \cdots (k-2)}   \ge \frac{n-k+3}{r+1} \ge \frac{k-1}{r+4}.$$
	Finally, when $r=k-2$, $\frac{r+7}{k-2}\binom{n}{k-3}=\frac{k+5}{k-2}\binom{n}{k-3}>\binom{n}{k-3}$, and for $r \le k-3$,
	$$\frac{\binom{n}{k-3}}{\binom{n}{r-1}}=\frac{(n-k+4) \cdots (n-r+1)}{r \cdots (k-3)}   \ge \frac{n-k+4}{r} \ge \frac{k-2}{r+7}.$$
	These three inequalities immediately imply inequality \eqref{ineq_new}.


\end{enumerate}

For $j=2$, we will use a different hyperplane when applying Farkas' Lemma. Now we have $n=2k+r$, $1 \le r \le k-2$. We define $\vec{y}$ as follows. We always take $y_1 = \cdots = y_{r-1}=1$, $y_r=2$, and $y_k=-1$. For $r \not \equiv k \pmod 2$, we take $y_{r+1} = \cdots = y_{\lfloor(r+k)/2\rfloor}=1$, and for $r \equiv k \pmod 2$, we take we take $y_{r+1} = \cdots = y_{\lfloor(r+k)/2\rfloor-1}=1$, $y_{\lfloor(r+k)/2\rfloor}=1/2$. The remaining $y_i$'s are set to be $0$.

First we prove $\sum_{i=1}^k \lambda_i y_i \ge 0$. This is obviously true if $\lambda_k=0$. Suppose $\lambda_k=2$, then $\sum_{i=1}^{k-1} i \lambda_i = r$. In this case we either have $\sum_{i=1}^{r-1} \lambda_i \ge 2$ or $\lambda_r=1$, both implying $\sum_{i=1}^k \lambda_i y_i \ge 0$. The remaining case is $\lambda_k=1$ and we have $\sum_{i=1}^{k-1} i \lambda_i = k+r$.
If there exists $i \le r$ with $\lambda_i \ge 1$ then we are already done. Now suppose $\lambda_1=\cdots= \lambda_r=0$, we have $\sum_{i=r+1}^{k-1} i \lambda_i =k+r$. Either (in the case when $k$ and $r$ have the same parity) we have $\lambda_{(k+r)/2}=2$, or there exists some $i<(k+r)/2$ such that $\lambda_i=1$. By our choice of $\vec{y}$, it is not hard to see that once again $\sum_{i=1}^k \lambda_i y_i \ge 0$.

Finally let us prove that $\vec{y}^T\vec{b}<0$. For $r \equiv k \pmod 2$, $1 \le r \le k-2$ and $n=2k+r$, we need to show
\begin{equation}\label{ineq_even}
\left(\sum_{i=1}^{(k+r)/2-1} \binom{n}{i}\right) + \frac{1}{2} \binom{n}{(k+r)/2} + \binom{n}{r} < \binom{n}{k}
\end{equation}
For $i \le (k+r)/2$ we have
$$\binom{n}{i-1}/\binom{n}{i}=\frac{i}{n-i+1} \le \frac{(k+r)/2}{(3k+r)/2+1}\le \frac{k-1}{2k}.$$
Also $\binom{n}{r} \le \binom{n}{(k+r)/2-1}$, thus the left hand side of \eqref{ineq_even} is at most
\begin{align*}
&~~\left(\frac{k-1}{2k}+\left(\frac{k-1}{2k}\right)^2+\cdots\right) \binom{n}{(k+r)/2} + \frac{1}{2}\binom{n}{(k+r)/2}+\frac{k-1}{2k}\binom{n}{(k+r)/2}\\
&<\frac{k-1}{k+1} \binom{n}{(k+r)/2}+ \frac{2k-1}{2k}\binom{n}{(k+r)/2} =\frac{4k^2-k-1}{2k^2+2k}\binom{n}{(k+r)/2} \le \binom{n}{k}.
\end{align*}
The last inequality follows from
\begin{align*}
\binom{n}{k}/\binom{n}{(k+r)/2}&=\frac{(n-k+1)\cdots (n-(k+r)/2)}{(\frac{k+r}{2}+1) \cdots k} \ge \frac{n-(k+r)/2}{(k+r)/2 +1}\\
&=\frac{3k+r}{k+r+2} \ge \frac{3k+(k-2)}{k+(k-2)+2}=\frac{2k-1}{k}.
\end{align*}
For the case $r \not\equiv k \pmod 2$, in this case we have $r \le k-3$, we need to show
\begin{equation}\label{ineq_odd}
	  \binom{n}{r}+\sum_{i=1}^{(k+r-1)/2} \binom{n}{i} < \binom{n}{k}
\end{equation}
For $i \le (k+r+1)/2$,
$$\binom{n}{i-1}/\binom{n}{i}=\frac{i}{n-i+1} \le \frac{k+r+1}{3k+r+1}\le \frac{k+(k-3)+1}{3k+(k-3)+1}=\frac{k-1}{2k-1}.$$
Also $\binom{n}{r} \le \binom{n}{(k+r-1)/2} \le \frac{k-1}{2k-1}\binom{n}{(k+r+1)/2}$. So the left hand side of \eqref{ineq_odd} is at most
\begin{align*}
&~~\left(\frac{k-1}{2k-1}+\left(\frac{k-1}{2k-1}\right)^2 + \cdots \right) \binom{n}{(k+r+1)/2} + \binom{n}{r}\\
&< \frac{k-1}{k} \binom{n}{(k+r+1)/2} + \frac{k-1}{2k-1}\binom{n}{(k+r+1)/2} < \binom{n}{k}.
\end{align*}
The last inequality follows from
\begin{align*}
	\binom{n}{k}/\binom{n}{(k+r+1)/2}&=\frac{(n-k+1)\cdots (n-(k+r+1)/2)}{(k+r+3)/2 \cdots k} \ge \frac{n-(k+r+1)/2}{(k+r+3)/2}\\
	&=\frac{3k+r-1}{k+r+3} \ge \frac{3k+(k-3)-1}{k+(k-3)+3}=\frac{2k-2}{k}>\frac{k-1}{k}+\frac{k-1}{2k-1}.
\end{align*}
\qed

\medskip

\noindent {\bf Proof of Lemma \ref{th1.4}}:
	(i) We first consider the case $n=j k+k$, $1\le j\le k-4$. Let  $$\vec{y}^T={(\underbrace{j+1,\dots,j+1}_{j+1},\underbrace{j/2,\dots,j/2}_{k-j-3},-1,0)}.$$
	Then once again from Farkas' Lemma, we just need to verify $A\vec{y}\ge \vec{0}$ and $\vec{b}^T\vec{y} <0$.
	
	For each row vector $\vec{\lambda}=(\lambda_1, \cdots, \lambda_k)$ of $A$, we know $\sum_{i=1}^{k}i \lambda_{i}=n$. Since $n=jk+k={\left(j+1\right)}{\left(k-1\right)}+j+1$ and $1\le j\le k-4$, we know that $\lambda_{k}+\lambda_{k-1}\le j+1$ and $\lambda_{k}+\lambda_{k-1}\le j$ if $\sum_{i=j+2}^{k-2}\lambda_{i}>0$. We discuss these two cases separately:
	\begin{enumerate}
	\item  When $\sum_{i=j+2}^{k-2}\lambda_{i}=0$. If we also have $\sum_{i=1}^{j+1}\lambda_{i}=0$, then  $\vec{\lambda}={(0,\dots,0, j+1)}$ and trivially $\vec{\lambda}\vec{y}\ge 0$. Now suppose $\sum_{i=1}^{j+1}\lambda_{i}\ge 1$, this case is also trivial since $\lambda_{k}+\lambda_{k-1}\le j+1$.

	\item When  $\sum_{i=j+2}^{k-2}\lambda_{i}>0$. If $\sum_{i=j+2}^{k-2}\lambda_{i}\ge 2$, observe that $\lambda_{k}+\lambda_{k-1}\le j$ and we also have $\vec{\lambda}\vec{y}\ge 2 \cdot (j/2) - j \ge 0$. Thus we just need to consider the case $\sum_{i=j+2}^{k-2}\lambda_{i}=1$. If $\sum_{i=1}^{j+1}\lambda_{i}=0$ then $(k-1) \lambda_{k-1}+k\lambda_k  \in \{(jk+2, (j+1)k-(j+2)\}$, which implies that $\lambda_{k-1} \in \{j+2, \cdots, k-2\}$, contradicting $\lambda_k+\lambda_{k-1} \le j$. Therefore $\sum_{i=1}^{j+1}\lambda_{i}\ge 1$, and this implies $\vec{\lambda}\vec{y}\ge (j+1)-j >0$.
	\end{enumerate}

   To apply Farkas' Lemma, we will also need to show that $\vec{b}^T\vec{y}<0$. We know $b_i=\binom{n}{i}$ in the system $\eqref{eq1.1}$. First of all, similar as before, $$\binom{n}{k-i+1}=\frac{j}{2}\binom{n}{k-i}+\frac{j}{2}\binom{n}{k-i}+\frac{1+{\left(j+1\right)} {\left(i-1\right)}}{k-{\left(i-1\right)}}\binom{n}{k-i}.$$Thus by repeatedly applying this identity, we have
	\begin{equation}\label{eq2.3}
		\binom{n}{k-1}=\sum_{i=2}^{k-1}{\left(\frac{j}{2}\right)}^{i-1}\binom{n}{k-i}+{\left(\frac{j}{2}\right)}^{k-2}\binom{n}{1}+\sum_{i=2}^{k-1}{\left(\frac{j}{2}\right)}^{i-2}\frac{1+{\left(j+1\right)} {\left(i-1\right)}}{k-{\left(i-1\right)}}\binom{n}{k-i}.
	\end{equation}
	What we need to prove is
	\begin{equation}\label{ineq2.4}
		\binom{n}{k-1}>\sum_{i=2}^{k-1}\frac{j}{2}\binom{n}{k-i}+\sum_{i=k-j-1}^{k-1}\frac{j+2}{2}\binom{n}{k-i}.
	\end{equation}
	Notice that $j+1 \le k-3$ since $2\le j \le k-4$, we divide our discussion into several cases based on the value of $j$.
	\begin{enumerate}
		\item For $j\ge 5$, we have ${\left(\frac{j}{2}\right)}^2-j-1>0$ and thus
		\begin{align*}
			\binom{n}{k-1}&=\sum_{i=2}^{k-1}{\left(\frac{j}{2}\right)}^{i-1}\binom{n}{k-i}+{\left(\frac{j}{2}\right)}^{k-2}\binom{n}{1}+\sum_{i=2}^{k-1}{\left(\frac{j}{2}\right)}^{i-2}\frac{1+{\left(j+1\right)} {\left(i-1\right)}}{k-{\left(i-1\right)}}\binom{n}{k-i}\\
			&>\sum_{i=2}^{k-1}{\left(\frac{j}{2}\right)}^{i-1}\binom{n}{k-i}>\sum_{i=2}^{k-1}\frac{j}{2}\binom{n}{k-i}+\sum_{i=k-j-1}^{k-1}\frac{j+2}{2}\binom{n}{k-i}.
		\end{align*}
		\item For $j=4$, we have ${\left(\frac{j}{2}\right)}^3-j-1>0$. It is easy to verify the inequality \eqref{ineq2.4} for $j+1\le k-4$. So it remains to check the case when $j+1=k-3$ (i.e. $k=j+4=8$ and $n=jk+k=40$). We have $\vec{y}^T={\left(5,5,5,5,5,2,-1,0\right)}$ and $\vec{b}^T={\left(\binom{40}{8},\dots,\binom{40}{1}\right)}$ when $k=j+4=8$ and $n=jk+k=40$. We have $\vec{b}^T\vec{y}<0$ by calculation.
		\item For $j=3$, note that ${\left(\frac{j}{2}\right)}^4-j-1>0$, it is easy to check that the inequality \eqref{ineq2.4} when $j+1\le k-5$. So we just need to check the case that $j+1=k-3$ and $j+1=k-4$. We have $\vec{y}^T={\left(4,4,4,4,1.5,-1,0\right)}$ and $\vec{b}^T={\left(\binom{28}{7},\dots,\binom{28}{1}\right)}$ when $k=j+4=7$ and $n=jk+k=28$. We have $\vec{b}^T\vec{y}<0$ by calculation. Similarly, we have $\vec{y}^T={\left(4,4,4,4,1.5,1.5,-1,0\right)}$ and $\vec{b}^T={\left(\binom{32}{8},\dots,\binom{32}{1}\right)}$ when $k=j+5=8$ and $n=jk+k=32$. Both cases have $\vec{b}^T\vec{y}<0$ by calculation.
		\item For $j=2$, we compare the identity \eqref{eq2.3} with the inequality \eqref{ineq2.4}, after canceling some terms, what we need to prove is
		\begin{equation}\label{ineq2.5}
			\binom{n}{1}+\sum_{i=2}^{k-1}\frac{1+3 {\left(i-1\right)}}{k-{\left(i-1\right)}}\binom{n}{k-i}>2\left(\binom{n}{3}+\binom{n}{2}+\binom{n}{1}\right).
		\end{equation}
		Note that $\frac{1+3(i-1)}{k-(i-1)}$ is increasing in $i$. Furthermore setting $i=k-3$, we have $\frac{1+3 {\left(i-1\right)}}{k-{\left(i-1\right)}}=\frac{1+3 {\left(k-3-1\right)}}{k-{\left(k-3-1\right)}}=\frac{3k-11}{4}$. If $\frac{3k-11}{4}\ge 2$, i.e. $k\ge7$, we have proved the inequality \eqref{ineq2.5}. Since $j\le k-4$, we also have $k\ge6$. Thus we just need to check the case that $j=2$ and $k=6$. We have $\vec{y}^T={\left(3,3,3,1,-1,0\right)}$ and $\vec{b}^T={\left(\binom{18}{6},\dots,\binom{18}{1}\right)}$. It is easy to check that $\vec{b}^T\vec{y}<0$ by calculation.
	\end{enumerate}
	\bigskip
	
	\noindent (ii) Next we consider the case $n=j\cdot k+k-1$, $1\le j \le \lceil \frac{k}{2}\rceil -3$. Let $$\vec{y}^T={(\underbrace{j+1,\dots,j+1}_{2j+1},\underbrace{j/2,\dots,j/2}_{k-2j-4},-1,j,-1 )}.$$ Then we just need to prove $A\vec{y}\ge \vec{0}$ and $\vec{b}^T\vec{y} <0$. We start by checking $A\vec{y} \ge \vec{0}$.
	
	For each row vector $\vec{\lambda}=(\lambda_1, \cdots, \lambda_k)$ of $A$, by definition we have $\sum_{i=1}^{k} i \lambda_{i}=n$. Since $n=j\cdot k+k-1={\left(j+1\right)}\cdot {\left(k-2\right)}+2j+1$ and $1\le j \le \lceil \frac{k}{2}\rceil -3$, we have $\lambda_{k}+\lambda_{k-2}\le j+1$ and $\lambda_{k}+\lambda_{k-2}\le j$ if $(\sum_{i=2j+2}^{k-3}\lambda_{i})+\lambda_{k-1}>0$.  Then we consider the following two cases:
	\begin{enumerate}
	\item When $\sum_{i=2j+2}^{k-3}\lambda_{i}=0$. If $\sum_{i=1}^{2j+1}\lambda_{i}=0$, note that the equation $(k-2)\lambda_{k-2}+k\lambda_k=jk+k-1$ has no non-negative integer solution. Therefore the only possibility is $\vec{\lambda}={(\underbrace{0,\dots,0}_{k-2},1,j)}$, which gives $\vec{\lambda}\vec{y}\ge 0$ trivially. The case $\sum_{i=1}^{2j+1}\lambda_{i}\ge 1$ is also trivial since $\lambda_{k}+\lambda_{k-2}\le j+1$.
	\item	Now suppose $\sum_{i=2j+2}^{k-3}\lambda_{i}>0$. If $\sum_{i=2j+2}^{k-3}\lambda_{i}\ge 2$, then $\lambda_{k}+\lambda_{k-2}\le j$, and thus  $\vec{\lambda}\vec{y}\ge 0$. So we just need to consider the case $\sum_{i=2j+2}^{k-3}\lambda_{i}=1$. To prove $\vec{\lambda}\vec{y}\ge 0$, we just need to show $(\sum_{i=1}^{2j+1}\lambda_{i})+\lambda_{k-1} \ge  1$. This is true since the inequality $n-(k-3) \le (k-2)\lambda_{k-2}+k \lambda_k \le n-(2j+2)$ has no non-negative integers solution for $1 \le j \le \lceil k/2 \rceil -3$.
	\end{enumerate}

	Next we will prove that $\vec{b}^T\vec{y}<0$. Recall that $b_i=\binom{n}{i}$ in the system $\eqref{eq1.1}$. It is easy to check that $$\binom{n}{k-i+1}=\frac{j}{2}\binom{n}{k-i}+\frac{j}{2}\binom{n}{k-i}+\frac{{\left(j+1\right)} {\left(i-1\right)}}{k-{\left(i-1\right)}}\binom{n}{k-i}.$$Thus we have
	\begin{equation}\label{eq2.6}
		\binom{n}{k-2}=\sum_{i=3}^{k-1}{\left(\frac{j}{2}\right)}^{i-2}\binom{n}{k-i}+{\left(\frac{j}{2}\right)}^{k-3}\binom{n}{1}+\sum_{i=3}^{k-1}{\left(\frac{j}{2}\right)}^{i-3}\frac{{\left(j+1\right)} {\left(i-1\right)}}{k-{\left(i-1\right)}}\binom{n}{k-i}.
	\end{equation}
	Observe that  $\binom{n}{k}=j \binom{n}{k-1}$ when $n=jk+(k-1)$, so what we need to prove is
	\begin{equation}\label{ineq2.7}
		\binom{n}{k-2}>\sum_{i=3}^{k-1}\frac{j}{2}\binom{n}{k-i}+\sum_{i=k-2j-1}^{k-1}\frac{j+2}{2}\binom{n}{k-i}.
	\end{equation}
	Note that $2j+1 \le k-4$ since $2\le j \le \lceil \frac{k}{2}\rceil -3$, we discuss the following cases according to the value of $j$.
	\begin{enumerate}
		\item For $j \ge 5$, we have ${\left(\frac{j}{2}\right)}^2-j-1>0$, therefore
		\begin{align*}
			\binom{n}{k-2}&=\sum_{i=3}^{k-1}{\left(\frac{j}{2}\right)}^{i-2}\binom{n}{k-i}+{\left(\frac{j}{2}\right)}^{k-3}\binom{n}{1}+\sum_{i=3}^{k-1}{\left(\frac{j}{2}\right)}^{i-3}\frac{{\left(j+1\right)} {\left(i-1\right)}}{k-{\left(i-1\right)}}\binom{n}{k-i}\\
			&>\sum_{i=3}^{k-1}{\left(\frac{j}{2}\right)}^{i-2}\binom{n}{k-i}>\sum_{i=3}^{k-1}\frac{j}{2}\binom{n}{k-i}+\sum_{i=k-2j-1}^{k-1}\frac{j+2}{2}\binom{n}{k-i}.
		\end{align*}
		\item For $j=4$, we have ${\left(\frac{j}{2}\right)}^3-j-1>0$, it is easy to check that the inequality \eqref{ineq2.7} similarly with Case $1$ when $2j+1\le k-5$.  So we just need to check the case that $2j+1=k-4$, which gives $k=2j+5=13$ and $n=jk+k-1=64$. We have $\vec{y}^T={(5,5,5,5,5,5,5,5,5,2,-1,4,-1)}$ and $\vec{b}^T={\left(\binom{64}{13},\dots,\binom{64}{1}\right)}$. By calculation $\vec{b}^T\vec{y}<0$.
		\item For $j=3$, we have ${\left(\frac{j}{2}\right)}^4-j-1>0$. it is easy to check that the inequality \eqref{ineq2.7} similarly with case $1$ when $2j+1\le k-6$. So we just need to check the case that $2j+1=k-4$ and $2j+1=k-5$. We have $\vec{y}^T={(4,4,4,4,4,4,4,1.5,-1,3,-1)}$ and $\vec{b}^T={\left(\binom{43}{11},\dots,\binom{43}{1}\right)}$ when $k=2j+5=11$ and $n=jk+k-1=43$. Similarly, we have $\vec{y}^T={(4,4,4,4,4,4,4,1.5,1.5,-1,3,-1)}$ and $\vec{b}^T={\left(\binom{47}{12},\dots,\binom{47}{1}\right)}$ when $k=2j+6=12$ and $n=jk+k-1=47$. In both cases, calculations give $\vec{b}^T\vec{y}<0$.
		\item For $j=2$, we compare the identity \eqref{eq2.6} with the inequality \eqref{ineq2.7} and note that it suffices to prove
		\begin{equation}\label{ineq2.8}
			\binom{n}{1}+\sum_{i=3}^{k-1}\frac{3 {\left(i-1\right)}}{k-{\left(i-1\right)}}\binom{n}{k-i}>2\sum_{i=1}^{5}\binom{n}{i}.
		\end{equation}
		Note that $\frac{3 {\left(i-1\right)}}{k-{\left(i-1\right)}}$ is monotone increasing in $i$. So we could focus our attention on the coefficient of the term $\binom{n}{5}$. Note that when $i=k-5$, $\frac{3 {\left(i-1\right)}}{k-{\left(i-1\right)}}=\frac{3k-18}{6}$. If $\frac{3k-18}{6}\ge 2$, i.e. $k \ge 10$, the inequality \eqref{ineq2.8} is obviously true. Since $j\le\lceil \frac{k}{2}\rceil -3$, we also have $k\ge9$. Thus it remains to check the case that $j=2$ and $k=9$. We have $\vec{y}^T={\left(3,3,3,3,3,1,-1,2,-1\right)}$ and $\vec{b}^T={\left(\binom{26}{9},\dots,\binom{26}{1}\right)}$. It is easy to verify $\vec{b}^T\vec{y}<0$ by calculation. \qed
	\end{enumerate}

\subsection{Sufficient condition}
In the previous subsection, we have found necessary conditions for the system \eqref{eq1.1} to have a non-negative integer solution. The following two lemmas below show that these necessary conditions are indeed sufficient.

\begin{lemma}\label{lem_solve1}
Suppose $n > 2k$ and $n=j\cdot k+k$, where $j$ is a non-negative integer such that $j \ge k-3$, then the system \eqref{eq1.1} has a non-negative integer solution.
\end{lemma}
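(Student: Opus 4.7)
The plan is to construct an explicit non-negative integer solution $\vec{x}$ to $A^T \vec{x} = \vec{b}$, where $\vec{b} = (\binom{n}{1}, \ldots, \binom{n}{k})^T$. Rather than appealing to an abstract feasibility argument, I will select a small family of $(n,k)$-types and assign each an explicit multiplicity, verifying that the resulting vector solves the system and has non-negative integer entries.

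I would fix three kinds of template types. First, the \emph{base} type $\tau_0 = (0,\ldots,0,j+1)$, consisting entirely of $k$-blocks. Second, for each $1 \le i \le \lfloor k/2 \rfloor$, the \emph{pair} type $\tau_i$ with one block of size $i$, one of size $k-i$, and $j$ blocks of size $k$ (replaced by two blocks of size $k/2$ plus $j$ blocks of size $k$ when $k$ is even and $i = k/2$). Third, for each $1 \le i < k/2$, the \emph{correction} type $\sigma_i$ with two blocks of size $k-i$, one block of size $2i$, and $j-1$ blocks of size $k$; its total weight is $2(k-i)+2i+(j-1)k = (j+1)k = n$, so it is an $(n,k)$-type, and the hypothesis $j \ge k-3$ (with $k \le 2$ trivial) ensures $j-1 \ge 0$. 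Minor edge-case adjustments are needed when $2i = k-i$ (i.e., $i = k/3$) or $i = k/2$, where some of the prescribed block sizes coincide.

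With this family in hand, I would determine the multiplicities $x_{\tau_0}, x_{\tau_i}, x_{\sigma_i}$ by matching the $k$ linear equations. Each pair $E_i, E_{k-i}$ is asymmetric since $\binom{n}{i} < \binom{n}{k-i}$ for $i < k/2$, and the $\sigma_i$ injects extra $(k-i)$-blocks to compensate, while $\tau_0$ absorbs the leftover $k$-blocks. The condition $j \ge k-3$ (equivalently $n \ge k(k-2)$) is precisely what makes the resulting $x_{\tau_0}$ non-negative, as a direct computation from the $k$-block equation $(j+1)\,x_{\tau_0} + j\sum_i x_{\tau_i} + (j-1)\sum_i x_{\sigma_i} = \binom{n}{k}$ should reveal.

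The main obstacle will be securing integrality and non-negativity simultaneously. The naive fractional value $x_{\sigma_i} = \tfrac{1}{2}\bigl(\binom{n}{k-i} - \binom{n}{i}\bigr)$ need not be an integer, and even when it is, its cross-contribution to $E_{2i}$ may exceed $\binom{n}{2i}$, forcing cascading corrections in $x_{\tau_{2i}}$. Resolving this cleanly will most likely require enlarging the family with a few further auxiliary types---for example, types with three blocks of size $k-i$ paired with smaller balancing blocks, or types in which the $2i$-block is split into two parts---to grant the parity and magnitude flexibility needed to push all multiplicities into $\mathbb{Z}_{\ge 0}$. A case analysis organized by the parity of $k$, and by whether $3 \mid k$, should then complete the construction.
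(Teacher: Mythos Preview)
Your proposal is a sketch, not a proof: you identify the main obstacles (integrality and the cross-contribution of $\sigma_i$ to level $2i$) but do not resolve them, deferring instead to ``a few further auxiliary types'' and a parity case analysis that you never carry out. More seriously, the cascading problem is not a minor nuisance but a fatal obstruction to your scheme as stated. Take $i=1$: your correction type $\sigma_1$ has two $(k-1)$-blocks and one $2$-block, and you need roughly $x_{\sigma_1}\approx\tfrac12\bigl(\binom{n}{k-1}-\binom{n}{1}\bigr)$ of them to balance level $k-1$. But each use deposits a $2$-block, so the contribution to level $2$ is of order $\binom{n}{k-1}$, which for $k\ge 4$ and $n\ge k(k-2)$ dwarfs the target $\binom{n}{2}$. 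No adjustment to $x_{\tau_2}$ can absorb this, and the same phenomenon recurs at every small $i$. Any repair that avoids polluting the low levels forces you to use a type whose non-$k$ blocks all have the \emph{same} size $i$, with the rest size $k$; working out how many $i$-blocks such a type must carry leads you straight to the paper's construction.

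The paper's argument avoids the coupling altogether. For each $i\in\{1,\dots,k-1\}$ it takes the single type with $\lambda_i=k/\gcd(k,i)$ blocks of size $i$ and $\lambda_k=n/k-i/\gcd(k,i)$ blocks of size $k$ (all other entries zero), so that this type touches only levels $i$ and $k$. Setting its multiplicity to $\binom{n}{i}\big/(k/\gcd(k,i))$ clears level $i$ exactly; integrality holds because $n/\gcd(n,i)\mid\binom{n}{i}$ (Kummer's theorem) and $k/\gcd(k,i)\mid n/\gcd(n,i)$ when $k\mid n$. The remainder at level $k$ is then absorbed by the all-$k$ type, and a short estimate shows that this remainder is non-negative precisely when $n\ge k(k-2)$ (with a small tweak for the boundary case $n=k^2-2k$). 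This decoupling is the idea you are missing.
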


\begin{proof}
Suppose $n=jk+k$ and $j \ge k-3$. So $n$ is at least $k^2-2k$ and divisible by $k$.
Below we construct an explicit non-negative integer solution to the system \eqref{eq1.1}.
First we consider the case when $n \ge k^2-k$.
For each $i\in \{1, \cdots, k-1\}$, we consider $\vec{\lambda}_{\texttt{i}}=(\lambda_1,\lambda_2,\cdots,\lambda_k) \in \mathbb{Z}_{\ge 0}^k$ such that
$$\lambda_i=\frac{k}{\gcd(k, i)},~~ \lambda_k=\frac{n}{k}-\frac{i}{\gcd(k, i)},~\mbox{ and }~\lambda_s=0~\textrm{for~}s \not\in \{i, k\}.$$
It is easy to check that $\sum_{\ell=1}^k \ell \lambda_{\ell}=n$.
Also note that $$\lambda_k = n/k -i/\gcd(k, i) \ge n/k-i \ge n/k-(k-1) \ge 0.$$
So $\vec{\lambda}_{\texttt{i}}$ is an $(n, k)$-type and thus appears as a row of the matrix $A$.
We will assign $\binom{n}{i}/(k/\gcd(k, i))$ to $x_{\vec{\lambda}_{\texttt{i}}}$, but first we explain why $\binom{n}{i}/(k/\gcd(k, i))$ is an integer. For every prime $p$,  if the $p$-adic valuation of $n$ is $x$ and that of $i$ is $y$, then the $p$-adic valuation of $n/\gcd(n, i)$ equals $\max\{x-y, 0\}$.
On the other hand, Kummer's Theorem \cite{Kummer} says that for any prime $p$, the $p$-adic valuation of $\binom{n}{i}$ is equal to the number of carries when $n-i$ is added to $i$ in base $p$. This quantity is obviously at least $\max \{x-y, 0\}$. Consequently $n/\gcd(n, i)$ always divides $\binom{n}{i}$. Since $n$ is a multiple of $k$, it is easy to see that $k/\gcd(k, i)$ divides $n/\gcd(n, i)$. This completes the proof of our claim.

Once we have chosen the value of $x_{\vec{\lambda}_{\texttt{i}}}$ for the aforementioned $\vec{\lambda}_{\texttt{i}}$ for $i\in \{1,\cdots,k-1\}$, the first $k-1$ equations of the system \eqref{eq1.1} have already been taken care of.
We just use the type $\vec{\lambda}_{\texttt{k}}=(\lambda_1,\cdots,\lambda_k) $ with $\lambda_k=n/k$ and $\lambda_s=0$ for $s \neq k$ to cover whatever is left for $b_k=\binom{n}{k}$.
Recall that $\binom{n}{k-i-1}/\binom{n}{k-i} \le k/(n-k+1)$ for $i\ge 0$, and $n \ge k^2-k$, therefore
the accumulated value $\gamma_{n,k}$ of the last equation of the system \eqref{eq1.1} equals
\begin{align*}
\sum_{i=1}^{k-1} \frac{n/k-i/\gcd(k, i)}{k/\gcd(k, i)}\cdot\binom{n}{i}
&=\sum_{i=1}^{k-1} \left(\frac{n}{k^2/\gcd(k, i)}-\frac{i}{k}\right)\cdot\binom{n}{i}
 \le \sum_{i=1}^{k-1} \left(\frac{n}{2k}-\frac{1}{k}\right)\cdot\binom{n}{i}\\
 &\le \left(\frac{n}{2k}-\frac{1}{k}\right)\left(\frac{k}{n-k+1}+\left(\frac{k}{n-k+1}\right)^2+\cdots \right)\binom{n}{k}\\
 &< \frac{n-2}{2(n-2k+1)} \binom{n}{k} \le \frac{k^2-k-2}{2(k^2-3k+1)} \binom{n}{k} \le \binom{n}{k}=b_k
\end{align*}
if $k \ge 4$. For $k=2$ the left hand side is $(n/2-1)n/2$, which is always less than $\binom{n}{2}$.
For $k=3$, the left hand side is $(n-3)n/9+(n-6)\binom{n}{2}/9$, which is also less than $\binom{n}{3}$.
It shows the number $x_{\vec{\lambda}_{\texttt{k}}}=(\binom{n}{k}-\gamma_{n,k})/(n/k)$ of the type $\vec{\lambda}_{\texttt{k}}$ needed is non-negative (it should be mentioned that the divisibility of $n/k$ to $\binom{n}{k}-\gamma_{n,k}$ is automatic by the setting). This proves the case when $n\ge k^2-k$.

For the remaining case $n=k^2-2k$,  we define the same $\vec{\lambda}_{\texttt{i}}$ and repeat the same assignment $x_{\vec{\lambda}_{\texttt{i}}}$ as the $n \ge k^2-k$ case for $i\in \{1,\cdots, k-3\}$.
We then let $x_{\vec{\lambda}}=\binom{n}{k-2}$ for the $(n,k)$-type $\vec{\lambda}$ with $\lambda_{k-2}=1$ and $\lambda_{k-1}=k-2$. As $\binom{n}{k-2}/\binom{n}{k-1}=1/(k-2)$,
these together take care of the first $k-1$ equations of the system \eqref{eq1.1}.
Finally, using the type $\vec{\lambda}_{\texttt{k}}$ and by the same estimation as the the $n \ge k^2-k$ case,
we can cover the level $k$ precisely.
This also gives a non-negative integer solution to the system \eqref{eq1.1}.
\end{proof}

\begin{lemma}\label{lem_solve2}
Suppose $n > 2k$ and $n=j\cdot k+k-1$, where $j$ is a non-negative such that $j \ge \lceil \frac{k}{2}\rceil -2$, then the system \eqref{eq1.1} has a non-negative integer solution.
\end{lemma}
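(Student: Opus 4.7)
The plan is to adapt the explicit construction from Lemma~\ref{lem_solve1} to the $n \equiv -1 \pmod{k}$ setting. Write $n = jk + k - 1$; the natural analogue of the all-$k$-sets type is the \emph{base type} $\vec{\mu}$ with $\lambda_{k-1} = 1$ and $\lambda_k = j$, valid since $(k-1) + jk = n$, and it will absorb the leftover counts at the top two levels. The remaining task is to assemble, for each $i \in \{1, \dots, k-2\}$, a combination of $(n,k)$-types that covers $\binom{n}{i}$ at level $i$ with non-negative integer multiplicities.

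For each small index $i \in \{1, \dots, \min(j, k-2)\}$, I would use the ``simple'' type $\vec{\lambda}_{\texttt{i}}$ with $\lambda_i = 1$, $\lambda_{k-1} = i+1$, $\lambda_k = j - i$, which is valid by $i + (k-1)(i+1) + k(j-i) = n$, and assign it multiplicity $\binom{n}{i}$; this exactly covers the level-$i$ equation, and when $j \ge k-2$ this takes care of every relevant $i$. The genuinely new regime is $\lceil k/2 \rceil - 2 \le j < k-2$, where the formula produces a negative $\lambda_k$ for every large index $i \in \{j+1, \dots, k-2\}$. For such $i$ I would instead introduce a pair of auxiliary types: a \emph{pair type} $\vec{\nu}_{\texttt{i}}$ with $\lambda_i = 1$, $\lambda_{k-1-i} = 1$, $\lambda_k = j$ (valid by $i + (k-1-i) + jk = n$), which ties levels $i$ and $k-1-i$ together; and a \emph{self-covering type} $\vec{\rho}_{\texttt{i}}$ supported on $\{i, k-2, k-1, k\}$ with $\lambda_i = c_i$ chosen so that $c_i \mid \binom{n}{i}$, the divisibility being verified by a Kummer-style argument applied to $n+1 = (j+1)k$. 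The multiplicities of $\vec{\nu}_{\texttt{i}}$ and $\vec{\rho}_{\texttt{i}}$, together with a correction to the multiplicity of the simple type at the partner level $k-1-i$, are determined by the coupled linear system formed by the equations at levels $i$ and $k-1-i$.

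With all of the above fixed, the multiplicity $x_{\vec{\mu}}$ is then forced by the equation at level $k-1$, while the equation at level $k$ follows automatically from the mass identity $n \sum_{\vec{\lambda}} x_{\vec{\lambda}} = \sum_{i=1}^{k} i \binom{n}{i}$, provided the total number of partitions equals $M = \sum_{i=1}^k \binom{n-1}{i-1}$, which is enforced since every type satisfies $\sum_i i \lambda_i = n$. The main obstacle is the large-$i$ regime: one must verify that the coupled $2 \times 2$ system at levels $i$ and $k-1-i$ admits a non-negative integer solution for every $i \in \{j+1, \dots, k-2\}$. Because $\binom{n}{i}$ is much larger than $\binom{n}{k-1-i}$ when $i > (k-1)/2$, the pair type on its own would overshoot the partner level, so the self-covering types are indispensable to absorb the excess; the non-negativity verification reduces to binomial-ratio inequalities analogous to those at the end of the proof of Lemma~\ref{lem_solve1}, which become tight precisely at the threshold $j = \lceil k/2 \rceil - 2$, reflecting the sharp lower bound $n \ge k(\lceil k/2 \rceil - 1) - 1$.
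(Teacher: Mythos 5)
There is a genuine gap, and it is structural rather than a matter of missing detail: your argument that ``the equation at level $k$ follows automatically from the mass identity, provided the total number of partitions equals $M$, which is enforced since every type satisfies $\sum_i i\lambda_i=n$'' is circular. The fact that every type has mass $n$ only says the total mass used is $n$ times the number of partitions you actually use; it does not force that number to be $M$ --- that equality is \emph{equivalent} to the level-$k$ equation you still have to prove. Concretely, in your ``easy'' regime $j\ge k-2$, assigning multiplicity $\binom{n}{i}$ to the type $(\lambda_i,\lambda_{k-1},\lambda_k)=(1,i+1,j-i)$ for each $i\le k-2$ and then choosing $x_{\vec\mu}$ to finish level $k-1$ leaves level $k$ covered by exactly $\binom{n}{k}-(j+1)\sum_{i=1}^{k-2} i\binom{n}{i}$, a strict undershoot; and since $k\nmid n$ there is no $(n,k)$-type supported on level $k$ alone, so a level-$k$ deficit cannot be repaired once the other levels are exact. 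The real constraint, which your plan never addresses, is that because $\binom{n}{k}=j\binom{n}{k-1}$ and the base type $\vec\mu$ contributes to levels $k-1,k$ in the fixed ratio $1:j$, the \emph{combined} contribution of all other types to levels $k-1$ and $k$ must itself be in the exact ratio $1:j$. One type per lower level with a prescribed multiplicity gives no freedom to meet this ratio, so the construction must be redesigned with at least two coordinated types per level; the sketch of the pair types $\vec\nu_{\texttt{i}}$ and the self-covering types $\vec\rho_{\texttt{i}}$ (whose coordinates on $\{k-2,k-1,k\}$ are never specified) does not resolve this, and the claimed $2\times2$ systems are local to the pair $\{i,k-1-i\}$ while the ratio constraint is global.

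For comparison, the paper handles exactly this obstruction in two ways. When $k$ is even (and for odd $k$ with $n+1\ge k(k-2)$) it sidesteps it: it solves the system for $\binom{[n+1]}{\{2,4,\dots,k\}}$ with $k\mid n+1$, where a single absorbing type lives on level $k$ alone, and then deletes the element $n+1$ from each $1$-factor to get a $1$-factorization of $K_n^{\le k}$. In the remaining odd-$k$ range it first peels off $\binom{[n]}{\le (k+2t-1)/2}$ (which is $1$-factorable by the earlier cases) and then works only on the top levels with the types $\textsf{R},\textsf{S}_i,\textsf{T}_i$, imposing the ratio condition explicitly as equation \eqref{eq_correct_ratio} and verifying integrality and non-negativity of all multiplicities in Appendix \ref{App}. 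Your proposal would need an analogue of that ratio equation, together with genuine integrality and non-negativity checks for the resulting system, before it could go through; as written, it fails already for $j\ge k-2$.
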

\begin{proof}
Suppose $n=jk+(k-1)$ for $j \ge \lceil k/2 \rceil -2$.
As $n>2k$, in fact we have $n\ge 3k-1$.
First we consider the case when $k$ is even.
Then we have $n \ge k^2/2-k-1$ and $k$ divides $n+1$. Now instead of finding solutions to $A^T \vec{x}=b$,
we consider the $n+1$ case and the new system $(A')^T \vec{y}=\vec{b}'$.
Here $A'$ is the matrix defined for the $(n+1, \mathcal{L})$ case with $\mathcal{L}=\{2,4,\cdots,k\}$, and $\vec{b}'$ is a vector in $\mathbb{R}^k$ such that $b_i=\binom{n+1}{i}$ for even $i$, and $0$ for odd $i$. This system of linear equations has a non-negative integer solution by taking the type $\vec{\lambda}_{\texttt{i}}=(\lambda_1,\lambda_2,\cdots,\lambda_k)$ with
$$\lambda_i=\frac{k}{\gcd(k, i)},~~ \lambda_k=\frac{n+1}{k}-\frac{i}{\gcd(k, i)},~\mbox{ and }~\lambda_s=0~\textrm{for~}s \not\in \{i, k\}.$$
and assigning $\binom{n+1}{i}/(k/\gcd(k, i))$ to $y_{\vec{\lambda}_{\texttt{i}}}$, for $i=2, 4, \cdots, k-2$
and then using the type $\vec{\lambda}_{\texttt{k}}=(\lambda_1',\lambda_2',\cdots,\lambda_k')$ such that $\lambda_k'=(n+1)/k$ and $\lambda_s'=0$ for each $s\neq k$ to cover whatever is left for $b_k$.
Similar as before, all defined $y_{\vec{\lambda}}$ are integers. Furthermore, since we only do it for even $i$, $k$ is also even and $n+1\ge k(k-2)/2$, we always have
\begin{equation}\label{equ:gcd}
\frac{n+1}{k}-\frac{i}{\gcd(k, i)} \ge \frac{n+1}{k}-\frac{i}{2} \ge \frac{n+1}{k}-\frac{k-2}{2} \ge 0.
\end{equation}
It only remains to show that the type $\vec{\lambda}_{\texttt{k}}$ appears for a non-negative number of times.
This can be estimated as follows (in a similar way as in the proof of Lemma~\ref{lem_solve1})
\begin{align*}
 &\sum_{i\in \{2,4,\cdots,k-2\}} \frac{(n+1)/k-i/\gcd(k, i)}{k/\gcd(k, i)}\cdot\binom{n+1}{i}\\
= & \sum_{i\in \{2,4,\cdots,k-2\}} \left(\frac{n+1}{2k}-\frac{2}{k}\right)\cdot\binom{n+1}{i}\\
\le &\frac{n-3}{2k}\cdot \left(\left(\frac{k}{n-k+2}\right)^2+\left(\frac{k}{n-k+2}\right)^3+\cdots \right)\cdot \binom{n+1}{k}\\
< &\frac{n-3}{k}\cdot \left(\frac{k}{n-k+2}\right)^2\cdot\binom{n+1}{k}=\frac{k(n-3)}{(n-k+2)^2}\cdot \binom{n+1}{k}\le \binom{n+1}{k}=b_k,
\end{align*}
where we have $\binom{n+1}{k-i-1}/\binom{n+1}{k-i} \le k/(n-k+2)\le 1/2$ for $i\ge 0$ and $k(n-3)\le (n-k+2)^2$ under the fact that $n\geq 3k-1$.
So this indeed gives a non-negative integer solution to $(A')^T\vec{y}=\vec{b}'$.
Therefore by Theorem \ref{thm_reduction}, if we let $\mathcal{F}$ consist of all the subsets of $[n+1]$ of even size up to $k$, i.e., $\mathcal{F}=\binom{[n+1]}{\{2,4,\cdots,k\}}$, then this family can be decomposed into $1$-factors.
We delete the element $n+1$ from the unique subset containing it in each $1$-factor.
It is not hard to see that this immediately gives a $1$-factorization of $K_{n}^{\le k}$,
and thus the system $\eqref{eq1.1}$ has a non-negative integer solution (under the conditions of Lemma~\ref{lem_solve2}) when $k$ is even.

For the remaining proof, we assume that $k$ is odd.
In this case, we have $n=jk+k-1$ with $j \ge \lceil k/2 \rceil -2 = (k-3)/2$.
So we may assume that $$n=(k^2-k-2)/2+tk \mbox{~ for some integer } t \ge 0.$$
It is not hard to see that the above proof for the even $k$ case still works for the odd $k$ case (i.e., applied to $\binom{[n+1]}{\{1,3,\cdots,k-2,k\}}$)
whenever the corresponding form for \eqref{equ:gcd} holds.
However, for odd $k$ and $i\in \{1,3,\cdots,k-2\}$, we cannot ensure $\gcd(k,i)\geq 2$, which was used in \eqref{equ:gcd} for the even $k$ case.
As to infer that $\frac{n+1}{k}-\frac{i}{\gcd(k,i)}\ge \frac{n+1}{k}-i\ge 0$ holds for all $i\in \{1,3,\cdots,k-2\}$,
this approach requires an additional condition $n+1\geq k(k-2)$ or equivalently $t\ge (k-3)/2$.
Therefore, from now on we may further assume that $$0 \le t \le (k-5)/2.$$
Since $n=k(k+2t-1)/2-1$ and $(k+2t-1)/2\le k-3$,
if we view $(k+2t-1)/2$ as a new parameter $k'$, then $n=j'\cdot k'-1$ with $j'=k\ge k'+3$.
By the previous proofs, no matter if $k'$ is odd or even, we can always conclude that
$\binom{[n]}{\le (k+2t-1)/2}=\binom{[n]}{\le k'}$ is $1$-factorable.
So it suffices to show that the family consisting of subsets of $[n]$ of size between $(k+2t+1)/2$ and $k$,
denoted by $\binom{[n]}{\{(k+2t+1)/2, \cdots,k-1, k\}}$, is $1$-factorable.\footnote{Theorem~\ref{thm_reduction} states that for any set $\mathcal{L}$ of distinct positive integers,
the family $\binom{[n]}{\mathcal{L}}$ is $1$-factorable if and only if the corresponding system $\eqref{eq1.1}$ for $\mathcal{L}$ has a non-negative integer solution.
We are aware that we are looking for non-negative integer solutions for the (corresponding) system $\eqref{eq1.1}$, however for convenience of presentation, we should mention and identify both settings.}

The rest of the proof will be divided into two cases: when $t=(k-5)/2$ and when $0\leq t\leq (k-7)/2$.
First we consider the case when $t=(k-5)/2$.
In this case, we have $n=(k^2-k-2)/2+tk=k^2-3k-1$, and $(k+2t+1)/2=k-2$.
So it suffices to show $\binom{[n]}{\{k-2, k-1, k\}}$ is $1$-factorable.
We will use the following three $(n, \{k-2,k-1,k\})$-types (all unspecified coordinates $\lambda_i$ are $0$ by default):
$$\textsf{A}:~~~~\lambda_{k-2}=(k+1)/2,~~\text{and}~~\lambda_k=(k-5)/2.$$
$$\textsf{B}:~~~~\lambda_{k-2}=(k-1)/2,~~~~\lambda_{k-1}=2,~~\text{and}~~\lambda_k=(k-7)/2.$$
$$\textsf{C}:~~~~\lambda_{k-1}=1,~~\text{and}~~\lambda_k=k-4.$$
Note that $\binom{n}{k}/\binom{n}{k-1}=(n-k+1)/k=k-4$ and type $\textsf{C}$ has the same ratio for level $k-1$ and $k$.
So if we assume that type $\textsf{A}$ is used $a$ times and type $\textsf{B}$ is used $b$ times,
then we need to find a non-negative integer solution to the following system of equations:
\begin{equation*}
\frac{k+1}{2}a+\frac{k-1}{2}b=\binom{n}{k-2}=\binom{k^2-3k-1}{k-2},
\end{equation*}
\begin{equation*}
	\left(\frac{k-5}{2}a+\frac{k-7}{2}b\right)\bigg/{2b}=k-4.
\end{equation*}
Solving it gives that
$$a=\frac{(k-3)/{2}}{(k^2-3k-1)/{3}}\binom{k^2-3k-1}{k-2}~~\text{and}~~b=\frac{(k-5)/{2}}{k^2-3k-1}\binom{k^2-3k-1}{k-2}.$$
Note that $\frac{k^2-3k-1}{3}$ divides $\frac{k^2-3k-1}{\gcd(3, k-2)}$, and $\frac{k^2-3k-1}{\gcd(3, k-2)}=\frac{k^2-3k-1}{\gcd(k^2-3k-1, k-2)}$ divides $\binom{k^2-3k-1}{k-2}$;
also $k$ is odd, so we can see that $a$ is a non-negative integer.
For $b$, note that when $k \not\equiv 2 \pmod 3$, $k^2-3k-1=\frac{k^2-3k-1}{\gcd(3, k-2)}$ divides $\binom{k^2-3k-1}{k-2}$,
while when $k \equiv2 \pmod 3$ ($k$ is odd, so in fact $k\equiv5 \pmod 6$),
$(k^2-3k-1)/3$ divides $\binom{k^2-3k-1}{k-2}$ and $(k-5)/6$ is an integer.
Therefore $b$ is also an non-negative integer.
Finally by considering the level $k-1$, we can use the type $\textsf{C}$ for $c$ times, where
$$c=\binom{n}{k-1}-2b= \binom{n}{k-1}-\frac{k-5}{k^2-3k-1}\binom{n}{k-2} \ge 0.$$
The above analysis gives a non-negative integer solution to the corresponding system $\eqref{eq1.1}$ for $\mathcal{L}=\{k-2,k-1,k\}$
and thus the proof for the case $t=(k-5)/2$ is complete.

Now we consider the last case when $0 \le t\le (k-7)/2$.
For this case, we have $k\geq 7$ and we want to show that $\binom{[n]}{\mathcal{L}}$ is $1$-factorable for $\mathcal{L}=\{(k+2t+1)/2, \cdots,k-1, k\}$.
To show that, we will use the following $(n, \mathcal{L})$-types labelled by $$\textsf{R}, \textsf{S}_0, \textsf{S}_1, \textsf{S}_2, \cdots, \textsf{S}_{\frac{k-5}{2}-t}, \textsf{T}_1, \textsf{T}_2, \cdots, \textsf{T}_{\frac{k-5}{2}-t}.$$
Each of them is a vector $\vec{\lambda}=(\lambda_1, \cdots, \lambda_k)$ defined as follows (here all the unspecified coordinates $\lambda_i$ are equal to $0$):
for each $1\le i\le (k-5)/2-t$,
$$\textsf{R}:~~~~\lambda_{k-1}=1,~~\text{and}~~\lambda_k=(k-3)/{2}+t,$$
$$\textsf{S}_0:~~~~\lambda_{k-2}=(k+1)/{2},~~\text{and}~~\lambda_k=t,$$
$$\textsf{S}_i:~~~~\lambda_{k-2-i}=1,~~~\lambda_{k-2}=(k-1)/{2}-i,~~~\lambda_{k-1}=i,~~\text{and}~~\lambda_k=t, \mbox{ and}$$
$$\textsf{T}_i:~~~~\lambda_{k-2-i}=2,~~~\lambda_{k-2}=(k-3)/{2}-i,~~~\lambda_{k-1}=0,~~\text{and}~~\lambda_k=t+i.$$
It is not hard to verify that each such $\vec{\lambda}$ satisfies $\sum_{\ell\in \mathcal{L}} \ell \lambda_{\ell}=n$ so all of them are indeed $(n,\mathcal{L})$-types.
The assumption $t\le (k-7)/2$ guarantees that some types other than $\textsf{R}$ and $\textsf{S}_0$ are used.
Let us assume that we use type $\textsf{R}$ for $x$ times, type $\textsf{S}_i$ for $a_i$ times for each $i\in \{0, 1, \cdots, (k-5)/2-t\}$,
and type $\textsf{T}_i$ for $b_i$ times for each $i\in \{1, \cdots, (k-5)/2-t\}.$
Note that only the type $\textsf{R}$ corresponds to 1-factors of size $(k-1)/{2}+t$,
while the other types correspond to 1-factors of size $(k+1)/{2}+t$.
If we let $y=a_0+a_1\cdots + a_{(k-5)/2-t}+b_1+\cdots + b_{(k-5)/2-t}$, then we have
\begin{equation}\label{eq_subset}
	\left(\frac{k-1}{2}+t\right)x+\left(\frac{k+1}{2}+t\right)y=\binom{n}{(k+1)/2+t}+\cdots + \binom{n}{k},
\end{equation}
and
\begin{equation}\label{eq_factor}
	x+y=\binom{n-1}{(k-1)/2+t}+\cdots + \binom{n-1}{k-1}.
\end{equation}
The first equation is derived from double counting the total number of subsets in $\binom{[n]}{\{(k+2t+1)/2, \cdots, k-1, k\}}$,
while the second equation follows by double counting the number of $1$-factors in the $1$-factorization of $\binom{[n]}{\{(k+2t+1)/2, \cdots, k-1, k\}}$.
Solving \eqref{eq_subset} and \eqref{eq_factor} (see Appendix \ref{App} for a detailed proof),
we can obtain the precise values of $x$ and $y$ and show that both $x$ and $y$ are non-negative integers,
where
\begin{equation}\label{eq_y2}
a_0+\sum_{i=1}^{(k-5)/2-t} (a_i+b_i)=y=\sum_{i=0}^{(k-5)/2-t} \frac{k-2+2t+i((k-1)/2+t)}{n} \binom{n}{k-2-i}.
\end{equation}

After fixing the number of times $x$ for type $\textsf{R}$ and the total number of times $y$ for all other types,
our next step is to use the types $\textsf{S}_1, \cdots, \textsf{S}_{(k-5)/2-t}, \textsf{T}_1, \cdots, \textsf{T}_{(k-5)/2-t}$ to fully occupy the levels from $(k+2t+1)/2$ to $k-3$. This give rise to the following equations:
\begin{equation}\label{eq_lower_levels}
	a_i+2b_i = \binom{n}{k-2-i}~~~\textrm{for each~}i\in\{1, \cdots, (k-5)/2-t\}.
\end{equation}
Also note that $\binom{n}{k}/\binom{n}{k-1}=(k-3)/2+t$ and the type $\textsf{R}$ maintains this ratio.
Hence, we also need to guarantee that the contributions of other types except $\textsf{R}$ to level $k-1$ and $k$ are at an $1: (\frac{k-3}{2}+t)$ ratio.
This leads to the following equality

\begin{equation}\label{eq_correct_ratio}
	ty+\sum_{i=1}^{(k-5)/2-t} ib_i-\left(\frac{k-3}{2}+t\right)\cdot\sum_{i=1}^{(k-5)/2-t} ia_i=0.
\end{equation}

It turns out that to find a non-negative integer solution to the corresponding system $\eqref{eq1.1}$ for $\mathcal{L}=\{(k+2t+1)/2,\cdots,k-1,k\}$,
it will suffice to find a non-negative integer solution $a_0, a_1, \cdots, a_{(k-5)/2-t}$, $b_1, \cdots, b_{(k-5)/2-t}$ to the system of $(k-1)/2-t$ equations formed by \eqref{eq_y2}, \eqref{eq_lower_levels} and \eqref{eq_correct_ratio}.
To solve the latter system, we could simply take
$$b_i=\left\lfloor\binom{n}{k-2-i}/2\right\rfloor ~\mbox{ and }~a_i=\binom{n}{k-2-i}-2b_i~~\textrm{for each~}i=2, \cdots, (k-5)/2-t,$$
and leave the other three variables $a_0, a_1, b_1$ to be uniquely determined by the three equations \eqref{eq_y2}, \eqref{eq_lower_levels} for $i=1$ and \eqref{eq_correct_ratio}.
A proof for showing the above assertions will be fully provided in Appendix \ref{App}.
We have now completed the proof of Lemma~\ref{lem_solve2}.
\end{proof}

\subsection{Proofs of Theorems \ref{thm_main} and \ref{thm_equiv}}
With all the preparations above, finally we are ready to address our main theorems.\medskip

\noindent {\bf Proof of Theorem \ref{thm_main}:} For $k<n/2$, suppose $n, k$ satisfy one of the two conditions. By Lemmas \ref{lem_solve1} and \ref{lem_solve2}, the system \eqref{eq1.1} has a non-negative integer solution. Theorem \ref{thm_reduction} immediately tells us that for such $n, k$, $K_{n }^{\le k}$ is $1$-factorable.

Now suppose $K_{n}^{\le k}$ is $1$-factorable. Again using Theorem \ref{thm_reduction}, it is neccesary that the system \eqref{eq1.1} has a non-negative integer solution. By Lemma \ref{th1.3}, $n$ must be congruent to $0$ or $-1$ mod $k$. Now apply Lemma \ref{th1.4}, we know that one of the two conditions in the statement of Theorem \ref{thm_main} must be met and this completes our proof. \qed
~\\

As of now, we have completely characterized all the $n, k$ in the range $k<n/2$ such that $K_n^{\le k}$ is $1$-factorable. The range $k \ge n/2$ could be tackled in a similar fashion by applying Farkas' Lemma and Theorem \ref{thm_reduction}. However, the statement of Theorem \ref{thm_equiv} already suggests that there is a very simple reduction to the $k <n/2$ range, as demonstrated below.

~\\
\noindent {\bf Proof of Theorem \ref{thm_equiv}:} We first show that for $k \ge n/2$, if $K_n^{\le n-k-1}$ is $1$-factorable, then $K_n^{\le k}$ is also $1$-factorable. Note that in this range, we always have $n-k \le k$. Take an arbitrary $1$-factorization $M_1, \cdots, M_t$ of $K_n^{\le n-k-1}$. For every subset $S$ of $[n]$ of size between $n-k$ and $k$, we just pair $S$ with its complement. This gives $\frac{1}{2}\sum_{i=n-k}^k \binom{n}{i}$ $1$-factors, which together with $M_1, \cdots, M_t$ form a $1$-factorization of $K_n^{\le k}$.

Next we prove the opposite direction. Suppose $k \ge n/2$ and $K_{n}^{\le k}$ can be decomposed into $1$-factors $M_1, \cdots, M_s$. For every $k$-set $S$, it must appear in some $M_i$. Suppose $M_i$ consists of the subset $S$, together with $\ell$ other subsets $T_1, \cdots, T_\ell$. Then $|T_1 \cup \cdots \cup T_\ell|=n-|S|=n-k \le k$. So $T_1 \cup \cdots \cup T_\ell$ must also appear in some $M_j$ (possibly $j=i$ if $\ell=1$). We move those $T_1, \cdots, T_{\ell}$ from $M_i$ to $M_j$, and also move $T_1 \cup \cdots \cup T_{\ell}$ from $M_j$ to $M_i$, to obtain a new $1$-factorization of $K_n^{\le k}$. Now $S$ is paired with $\overline{S}$. We repeat this process for every $k$-set $S$, and apply the same operation for sets of size between $n/2$ and $k-1$ as well. At the end of this process, we end up with a $1$-factorization of $K_{n}^{\le k}$ such that for each $n-k \le i \le k$, every $i$-set is paired with its complement. Removing these $1$-factors gives an $1$-factorization of $K_n^{\le n-k-1}$. \qed

\section{Extensions to other unions of levels of hypercube}\label{sec_multilayer}
Given a set $\mathcal{L}$ of distinct positive integers, recall that we denote by $\binom{[n]}{\mathcal{L}}$ the family of subsets of $[n]$ whose size is an element of $\mathcal{L}$. Can we find a necessary and sufficient condition for $\binom{[n]}{\mathcal{L}}$ to be $1$-factorable, when $n$ is sufficiently large? Our Theorem \ref{thm_main} answers this question for $\mathcal{L}=\{1, \cdots, k\}$, showing that the condition needed is simply $n \equiv 0, -1 \pmod k$. Does there exist such a neat sufficient and necessary condition for general $\mathcal{L}$? In this section we establish a number of results in this direction. For the proofs below, we always let $k$ be the maximum element of $\mathcal{L}$.

\begin{theorem}\label{thm_general_0modk}
When $n$ is sufficiently large and divisible by $k$, $\binom{[n]}{\mathcal{L}}$ is always $1$-factorable.
\end{theorem}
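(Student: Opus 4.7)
The plan is to apply Theorem \ref{thm_reduction} and directly generalize the construction in the proof of Lemma \ref{lem_solve1}. Since $k \in \mathcal{L}$ and $k \mid n$, the key idea is that every required level $i \in \mathcal{L}$ can be ``absorbed'' by a type that combines $i$-sets with enough $k$-sets to exhaust $[n]$, and the remaining quota at level $k$ can be handled by the pure type $(0,\ldots,0,n/k)$.

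First, for each $i \in \mathcal{L} \setminus \{k\}$, I would define the $(n,\mathcal{L})$-type $\vec{\lambda}_{\texttt{i}}$ with $\lambda_i = k/\gcd(k,i)$, $\lambda_k = n/k - i/\gcd(k,i)$, and $\lambda_s = 0$ otherwise, verifying that $\sum_\ell \ell \lambda_\ell = n$ and that $\lambda_k \ge 0$ once $n \ge k(k-1)$. I then assign $x_{\vec{\lambda}_{\texttt{i}}} = \binom{n}{i}/(k/\gcd(k,i))$. The divisibility step is exactly the Kummer's theorem argument used in Lemma \ref{lem_solve1}: for every prime $p$, if $p$ has valuations $x,y$ on $n,i$ respectively, then $v_p(n/\gcd(n,i)) = \max(x-y,0)$, which is at most $v_p\binom{n}{i}$ by counting carries; since $k \mid n$, $k/\gcd(k,i)$ divides $n/\gcd(n,i)$ and hence divides $\binom{n}{i}$.

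After these assignments, the first $|\mathcal{L}| - 1$ equations of the system are already satisfied (each $b_i = \binom{n}{i}$ with $i \in \mathcal{L} \setminus \{k\}$ is exactly covered, and the coordinates for $i \in [k] \setminus \mathcal{L}$ remain zero since our types use no such $i$). I then use the pure type $\vec{\lambda}_{\texttt{k}} = (0,\ldots,0,n/k)$ to mop up what remains at level $k$, setting $x_{\vec{\lambda}_{\texttt{k}}} = (\binom{n}{k} - \gamma_{n,k})/(n/k)$, where
\[
\gamma_{n,k} = \sum_{i \in \mathcal{L} \setminus \{k\}} \left(\frac{n}{k^2/\gcd(k,i)} - \frac{i}{k}\right)\binom{n}{i}.
\]
Divisibility of $n/k$ into $\binom{n}{k} - \gamma_{n,k}$ is automatic from the system, so it remains only to verify non-negativity.

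The crux, as in Lemma \ref{lem_solve1}, is the bound $\gamma_{n,k} \le \binom{n}{k}$. Using $\binom{n}{i}/\binom{n}{k} \le (k/(n-k+1))^{k-i}$, one estimates
\[
\gamma_{n,k} \le \frac{n}{2k}\sum_{i \in \mathcal{L}\setminus\{k\}} \binom{n}{i} \le \frac{n}{2k}\left(\frac{k}{n-k+1} + \left(\frac{k}{n-k+1}\right)^2 + \cdots\right)\binom{n}{k},
\]
which is a geometric sum bounded by a constant multiple of $\binom{n}{k}/(n-2k+1)$ times $n$, hence strictly less than $\binom{n}{k}$ once $n$ is sufficiently large (in fact once $n \gtrsim 2k^2$ the routine calculation from Lemma \ref{lem_solve1} goes through verbatim). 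I do not anticipate any genuine obstacle here; the main (minor) subtlety is just checking that the types $\vec{\lambda}_{\texttt{i}}$ indeed have $\lambda_s = 0$ for $s \in [k]\setminus \mathcal{L}$, which holds by construction since each such type only uses coordinates $i \in \mathcal{L}$ and $k \in \mathcal{L}$. Invoking Theorem \ref{thm_reduction} then yields the $1$-factorization of $\binom{[n]}{\mathcal{L}}$.
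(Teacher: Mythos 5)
Your proposal is correct and follows essentially the same route as the paper: the paper's proof likewise mimics Lemma \ref{lem_solve1}, using the types $\vec{\lambda}_{\texttt{i}}$ with $\lambda_{\ell_i}=k/\gcd(k,\ell_i)$, $\lambda_k=n/k-\ell_i/\gcd(k,\ell_i)$ for each $\ell_i\in\mathcal{L}\setminus\{k\}$, the same Kummer-based divisibility argument, and the pure type $(0,\ldots,0,n/k)$ to absorb the remainder at level $k$. Your explicit non-negativity estimate for $\gamma_{n,k}$ matches what the paper leaves as ``follows similarly as before.''
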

\begin{proof}
Suppose $\mathcal{L}=\{\ell_1, \cdots, \ell_t\}$ with $\ell_1 < \cdots < \ell_t=k$, $n=jk$ and $j$ is sufficient large. We present a non-negative integer solution to the system \eqref{eq1.1} with $b_i=\binom{n}{i}$ for $i \in \mathcal{L}$ and $0$ otherwise, along the same line of the proof of Lemma \ref{lem_solve1}, and the conclusion follows from Theorem \ref{thm_reduction}.

For each $1 \le i \le t-1$, we take $\vec{\lambda} \in \mathbb{Z}_{\ge 0}^k$ such that
$$\lambda_{\ell_i}=\frac{k}{\gcd(k, \ell_i)},~~ \lambda_k=\frac{n}{k}-\frac{\ell_i}{\gcd(k, \ell_i)},~~\lambda_s=0~\textrm{for~}s \not\in \{\ell_i, k\}.$$
Similarly as before, it is not hard to see that $\vec{\lambda}$ is a $(n, \mathcal{L})$-type for sufficiently large $n$ divisible by $k$. Now we just let $x_{\vec{\lambda}}=\binom{n}{\ell_i}/(k/\gcd(k, \ell_i))$ to take care of $b_{\ell_i}$ for each $i=1, \cdots, t-1$. Finally we use the type $\vec{\lambda}=(0, \cdots, 0, n/k)$ to take care of the remainder for $b_k$. The number of such types is non-negative follows similarly as before.
\end{proof}

Our next result generalizes Lemma \ref{th1.3}.

\begin{theorem}
For sufficiently large $n$, if $\binom{[n]}{\mathcal{L}}$ is $1$-factorable, then $n \equiv 0$ or $-1 \pmod k$.

\end{theorem}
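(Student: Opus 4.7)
The plan is to show that the Farkas certificate constructed in the proof of Lemma~\ref{th1.3} for the case $\mathcal{L}=[k]$ applies essentially unchanged to any $\mathcal{L} \ni k$, so that no substantive new work is required. I would begin by assuming, toward a contradiction, that $n$ is sufficiently large with $n \not\equiv 0, -1 \pmod k$, and write $n = jk + r$ with $1 \le r \le k-2$; then $j$ can be taken arbitrarily large. By Theorem~\ref{thm_reduction} it suffices to show that the corresponding system \eqref{eq1.1} has no non-negative real solution, and by Farkas' Lemma (Lemma~\ref{lemma_farkas}) this reduces to exhibiting a vector $\vec{y} \in \mathbb{R}^k$ with $A_\mathcal{L}\vec{y}\ge \vec{0}$ and $\vec{b}_\mathcal{L}^T \vec{y} < 0$.

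The key observation is that the vector $\vec{y}$ constructed in Lemma~\ref{th1.3} has the special sign pattern $y_i \ge 0$ for every $i \in [k-1]$ and $y_k = -1$, in all cases considered there (both $j \ge 3$ and $j = 2$). Since every $(n, \mathcal{L})$-type is an $(n, [k])$-type with the additional restriction $\lambda_i = 0$ for $i \notin \mathcal{L}$, the inequality $\vec{\lambda}\cdot\vec{y} \ge 0$ proved in Lemma~\ref{th1.3} for every $(n, [k])$-type immediately yields $A_\mathcal{L}\vec{y} \ge \vec{0}$ as a subset of those same inequalities. For the inner product with $\vec{b}_\mathcal{L}$, I would decompose
$$\vec{b}_{\mathcal{L}}^T \vec{y} \;=\; \sum_{i \in \mathcal{L}} \binom{n}{i} y_i \;=\; \vec{b}_{[k]}^T \vec{y} \;-\; \sum_{i \in [k-1] \setminus \mathcal{L}} \binom{n}{i} y_i,$$
and observe that the first term is strictly negative by Lemma~\ref{th1.3} while the subtracted sum is non-negative (since $y_i \ge 0$ for $i \in [k-1]$), yielding $\vec{b}_{\mathcal{L}}^T \vec{y} < 0$.

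This $\vec{y}$ therefore serves as a Farkas certificate for the $\mathcal{L}$ system and, together with Theorem~\ref{thm_reduction}, contradicts the assumed $1$-factorability of $\binom{[n]}{\mathcal{L}}$. The only thing to verify carefully is the sign pattern of the explicit $\vec{y}$ from Lemma~\ref{th1.3}, but this is evident by inspection of the formulas there. So what might have looked like the main obstacle, namely having to redo the Farkas hyperplane construction for each $\mathcal{L}$, turns out to be a free consequence of how the certificate in Lemma~\ref{th1.3} was designed: dropping the coordinates $i \in [k-1] \setminus \mathcal{L}$ only removes non-negative contributions on both sides.
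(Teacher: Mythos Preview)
Your proposal is correct. Both your argument and the paper's proceed via Theorem~\ref{thm_reduction} and Farkas' Lemma, but they differ in how the certificate $\vec{y}$ is obtained. The paper builds a fresh (and slightly simpler) vector with $y_k=-1$, $y_r=j$, $y_s=j/2$ otherwise, and then verifies $A_{\mathcal{L}}\vec{y}\ge\vec{0}$ and $\vec{b}_{\mathcal{L}}^T\vec{y}<0$ directly, carrying out the inequality $\sum_{i<k}\tfrac{j}{2}\binom{n}{i}+\tfrac{j}{2}\binom{n}{r}<\binom{n}{k}$ for $j\ge 5$. You instead recycle the vector from Lemma~\ref{th1.3} and exploit its sign pattern ($y_i\ge 0$ for $i<k$, $y_k=-1$): since the rows of $A_{\mathcal{L}}$ are a subset of the rows of $A_{[k]}$, the inequality $A_{\mathcal{L}}\vec{y}\ge\vec{0}$ is inherited, and since deleting non-negative summands only decreases $\vec{b}^T\vec{y}$, the strict negativity is inherited too. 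Your route is shorter and makes transparent that the general-$\mathcal{L}$ statement is a free corollary of the $\mathcal{L}=[k]$ case; the paper's route is self-contained and avoids appealing to the somewhat lengthy case analysis inside Lemma~\ref{th1.3}.
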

\begin{proof}
Suppose $n=jk+r$ for $1 \le r \le k-2$. We use Farkas' Lemma and take $\vec{y} \in \mathbb{R}^k$ such that
$y_k=-1$, $y_r=j$ and $y_s=j/2$ for $s \not\in \{k, r\}$. First we explain why $A_{\mathcal{L}} \vec{y} \ge \vec{0}$. Take a row $\vec{\lambda}$ of $A_{\mathcal{L}}$, by our definition of $A_{\mathcal{L}}$, we have $\sum_{i=1}^k i\lambda_i=n$ and $\lambda_i=0$ for those $i \not\in \mathcal{L}$. Note that $n=jk+r < (j+1)k$, we have $\lambda_k \le j$ and $\sum_{i=1}^{k-1} i\lambda_i = r$ or $\sum_{i=1}^{k-1} i\lambda_i \ge r+k$. These imply either $\lambda_r=1$ or $\sum_{i=1}^{k-1} \lambda_i \ge 2$. In either case we have
$$\sum_{i=1}^k \lambda_i y_i = \left(\sum_{i=1}^{k-1} \lambda_k y_k \right) + \lambda_k y_k \ge j-j = 0.$$
Next we show that $\vec{y}\cdot \vec{b}_{\mathcal{L}} < 0$. Equivalently, we need to prove
$$\sum_{i \in \mathcal{L}\setminus \{k\}} \frac{j}{2} \binom{n}{i} + {\bf 1}_{r \in \mathcal{L}} \cdot \frac{j}{2} \cdot \binom{n}{r}<\binom{n}{k}.$$
We instead prove a stronger inequality
\begin{equation}\label{ineq_mod}
\sum_{i=1}^{k-1} \frac{j}{2} \binom{n}{i} +  \frac{j}{2} \cdot \binom{n}{r}<\binom{n}{k}.
\end{equation}
Note that for $i \ge 1$,
$$\binom{n}{k-i}/\binom{n}{k-i+1}=\frac{k-i+1}{n-k+i}=\frac{k-i+1}{(j-1)k+i+r} \le \frac{k}{(j-1)k+r+1} \le  \frac{1}{j-1}.$$
Also we have $\binom{n}{r} \le \binom{n}{k-2}$. Therefore the left hand side of \eqref{ineq_mod} is at most
\begin{align*}
&~~~\left(\frac{1}{j-1}+\left(\frac{1}{j-1}\right)^2+\cdots\right)\cdot\frac{j}{2}\cdot\binom{n}{k} + \frac{j}{2} \cdot \frac{1}{(j-1)^2}\cdot \binom{n}{k}\\
&<\left(\frac{j}{2(j-2)}+\frac{j}{2(j-1)^2}\right) \binom{n}{k} < \binom{n}{k}
\end{align*}
for $j \ge 5$. Since we assume that $n$ is sufficiently large, this completes the proof.
\end{proof}

When $n \equiv -1 \pmod k$, we can further prove the following.
\begin{theorem}
Suppose $n$ is sufficient large and $n=jk+k-1$ for some positive integer $k$. If $\binom{[n]}{\mathcal{L}}$ is $1$-factorable, then $\mathcal{L}$ must contain the element $k-1$.
\end{theorem}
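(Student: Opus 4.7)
The plan is to argue by contradiction using Theorem \ref{thm_reduction} together with Farkas' Lemma (Lemma \ref{lemma_farkas}), in exactly the same spirit as the proofs of Lemma \ref{th1.3} and Lemma \ref{th1.4}. Assume $k-1 \notin \mathcal{L}$ while $k = \max \mathcal{L} \in \mathcal{L}$. It suffices to exhibit a vector $\vec{y} \in \mathbb{R}^k$ such that $A_{\mathcal{L}} \vec{y} \ge \vec{0}$ and $\vec{b}_{\mathcal{L}}^T \vec{y} < 0$. The guiding intuition is that, since $k-1$ is forbidden, every admissible $(n,\mathcal{L})$-type $\vec{\lambda}$ with many copies of $k$ must still cover the residue $k-1$ of $n$ modulo $k$ using elements of size at most $k-2$, which forces at least \emph{two} non-$k$ parts per missing copy of $k$. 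This extra multiplicity is what a suitable hyperplane can exploit.

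Concretely, I would take $\vec{y}$ defined by $y_k = -1$, $y_i = j/2$ for every $i \in \mathcal{L}\setminus\{k\}$, and $y_i = 0$ otherwise. To verify $A_{\mathcal{L}} \vec{y} \ge \vec{0}$, consider any $(n, \mathcal{L})$-type $\vec{\lambda}$ and write $\lambda_k = j - s$ with $s \ge 0$, so $\sum_{i \in \mathcal{L},\, i < k} i\lambda_i = (s+1)k-1$. Setting $S = \sum_{i \in \mathcal{L},\, i<k}\lambda_i$ and using the crucial bound $i \le k-2$ (which is where $k-1 \notin \mathcal{L}$ enters), one gets
\[
S \;\ge\; \left\lceil \frac{(s+1)k-1}{k-2}\right\rceil \;=\; \left\lceil (s+1) + \frac{2s+1}{k-2}\right\rceil \;\ge\; s+2,
\]
for every $k \ge 3$. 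Then $\vec{\lambda}\cdot \vec{y} = (j/2)S - (j-s) \ge (j/2)(s+2) - (j-s) = s(j+2)/2 \ge 0$, so the constraint holds.

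The second task is to check $\vec{b}_{\mathcal{L}}^T \vec{y} = -\binom{n}{k} + \tfrac{j}{2}\sum_{i \in \mathcal{L}\setminus\{k\}}\binom{n}{i} < 0$. With $k$ fixed and $n \to \infty$ along $n = jk+k-1$, each index $i \in \mathcal{L}\setminus\{k\}$ satisfies $i \le k-2$, hence
\[
\frac{j}{2}\cdot \frac{\binom{n}{i}}{\binom{n}{k}} \;\le\; \frac{j}{2}\cdot \frac{\binom{n}{k-2}}{\binom{n}{k}} \;=\; \frac{j}{2}\cdot\frac{k(k-1)}{(n-k+1)(n-k+2)} \;=\; O\!\left(\tfrac{1}{n}\right).
\]
Since $|\mathcal{L}\setminus\{k\}| \le k-1$ is bounded, the whole sum is $o(1)$, so $\vec{b}_{\mathcal{L}}^T \vec{y} < 0$ for all sufficiently large $n$. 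Applying Farkas' Lemma yields that $(A_{\mathcal{L}})^T \vec{x} = \vec{b}_{\mathcal{L}}$ has no non-negative real solution, and Theorem \ref{thm_reduction} then forbids a $1$-factorization of $\binom{[n]}{\mathcal{L}}$.

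The main obstacle, such as it is, is just locating the right hyperplane: the combination of coefficient $j/2$ on the lower levels and $-1$ on level $k$ is exactly calibrated so that the identity $S \ge s+2$ (a direct consequence of $k-1 \notin \mathcal{L}$) makes the non-negativity constraint tight at $s = 0$ and slack for all larger $s$. Once this is in place, the feasibility of the separating hyperplane reduces to the routine asymptotic $\binom{n}{k-2}/\binom{n}{k} = O(1/n^2)$, which easily beats the $j = \Theta(n)$ factor. Edge cases like $\mathcal{L}=\{k\}$ or $k=2$ degenerate cleanly: either no valid $(n,\mathcal{L})$-type exists, or $n$ is odd and a perfect matching in $K_n$ is impossible, so the conclusion holds trivially.
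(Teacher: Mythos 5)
Your proposal is correct and follows essentially the same route as the paper: the identical separating hyperplane ($y_k=-1$, $y_i=j/2$ on $\mathcal{L}\setminus\{k\}$) combined with Farkas' Lemma and Theorem \ref{thm_reduction}. The only cosmetic differences are that you certify $A_{\mathcal{L}}\vec{y}\ge\vec{0}$ via the counting bound $S\ge s+2$ where the paper rules out the case of a single non-$k$ part by the congruence $n\equiv k-1\pmod k$, and you verify $\vec{b}_{\mathcal{L}}^T\vec{y}<0$ asymptotically rather than by the paper's explicit ratio estimate.
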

\begin{proof}
We prove the contra-positive: suppose $k-1 \not\in \mathcal{L}=\{\ell_1, \cdots, \ell_t\}$ with $\ell_1< \cdots <\ell_t=k$, then $\binom{[n]}{\mathcal{L}}$ is not $1$-factorable. Once again we would like to use Farkas' Lemma to show that $A_{\mathcal{L}}^T\vec{x}=\vec{b}_{\mathcal{L}}$ has no non-negative solution. Take the vector $\vec{y} \in \mathbb{R}^k$ such that
$$y_{\ell_i} = \frac{j}{2}~~\textrm{for~}i=1, \cdots, t-1,~~~~~~~y_{k}=-1.$$
The rest of the coordinates of $\vec{y}$ are set to be zero. First we show that $A_{\mathcal{L}}\vec{y} \ge \vec{0}$. Take a row  $\vec{\lambda}=(\lambda_1, \cdots, \lambda_k)$, recall that $\lambda_i=0$ for $i \not\in \mathcal{L}$. So
$\sum_{i=1}^t \ell_i \lambda_{\ell_i}=n=jk+(k-1)$. Therefore $\lambda_k \le j$ and $\sum_{i=1}^{t-1} \lambda_{\ell_i} \ge 1$. If $\sum_{i=1}^{t-1} \lambda_{\ell_i} \ge 2$, then
$$\sum_{i=1}^k \lambda_i y_i \ge \frac{j}{2}\cdot \left(\sum_{i=1}^{t-1}\lambda_{\ell_i}\right)-\lambda_k \ge j-j=0.$$
If $\sum_{i=1}^{t-1} \lambda_{\ell_i}=1$, then $n$ is congruent to $\ell_i$ modulo $k$, for some $1 \le i \le t-1$, this is not possible since $k-1 \not\in \mathcal{L}$ and thus all such $\ell_i$ are at most $k-2$, but we have $n \equiv k-1 \pmod k$.

Next we prove $\vec{b}_{\mathcal{L}}\cdot\vec{y}<0$. Since $k-1 \not \in \mathcal{L}$, it suffices to prove
\begin{equation}\label{ineq_large}
\sum_{i=1}^{k-2} \frac{j}{2} \binom{n}{i}<\binom{n}{k}.
\end{equation}
Note that for every $i \ge 1$,
$$\binom{n}{k-i}/\binom{n}{k-i+1}=\frac{k-i+1}{n-k+i}=\frac{k-i+1}{jk+i-1} \le \frac{1}{j}.$$
Therefore the left hand side of inequality \eqref{ineq_large} is at most
$$\frac{j}{2}\left(\frac{1}{j^2}+\frac{1}{j^3}+\cdots\right) \binom{n}{k}< \frac{1}{2(j-1)}\binom{n}{k} \le \binom{n}{k},$$
as long as $j \ge 2$.
\end{proof}

We make the following conjecture based on the above results.
\begin{conjecture}\label{conj_general_L}
Given a set of distinct positive integers $\mathcal{L}$ whose largest element is $k$. Depending on the choice of $\mathcal{L}$, exactly one of the following statements must be true:\\
(i) For sufficiently large $n$, $\binom{[n]}{\mathcal{L}}$ is $1$-factorable if and only if $n \equiv 0 \pmod k$, \\
(ii) For sufficiently large $n$, $\binom{[n]}{\mathcal{L}}$ is $1$-factorable if and only if $n \equiv 0, -1 \pmod k$.
\end{conjecture}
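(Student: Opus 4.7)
The plan is to invoke Theorem \ref{thm_reduction} to reformulate Conjecture \ref{conj_general_L} as a question about solvability of the linear system \eqref{eq1.1}, and then to combine the three preceding theorems with a new structural criterion on $\mathcal{L}$. Those theorems together establish: $n\equiv 0\pmod k$ is always sufficient, $n\equiv 0$ or $-1\pmod k$ is necessary, and the case $n\equiv -1\pmod k$ further requires $k-1\in\mathcal{L}$. Only one implication is therefore open: for each $\mathcal{L}$ with $\max\mathcal{L}=k$ and $k-1\in\mathcal{L}$, decide whether $\binom{[n]}{\mathcal{L}}$ is $1$-factorable for all sufficiently large $n\equiv -1\pmod k$ (case (ii)) or for no such $n$ (case (i)). Small-case calculations suggest the criterion: case (ii) holds if and only if $\mathcal{L}=\{k-1,k\}$, or $\gcd(\mathcal{L}\setminus\{k-1\})=1$.

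For the negative direction, assume $d:=\gcd(\mathcal{L}\setminus\{k-1\})>1$. Then no admissible $(n,\mathcal{L})$-type can have $\lambda_{k-1}=0$: such a type would force $\sum_{\ell\in\mathcal{L}\setminus\{k-1\}}\ell\lambda_\ell\equiv -1\pmod k$, impossible because the left-hand side is a multiple of $d$ and $d\mid k$ yet $d\nmid 1$. A short computation using $n=jk+(k-1)$ then yields $k(\lambda_k-j\lambda_{k-1})=n(1-\lambda_{k-1})-\sum_{\ell\in\mathcal{L}\setminus\{k-1,k\}}\ell\lambda_\ell$, so $\lambda_k-j\lambda_{k-1}\le 0$ for every admissible type, with equality only at $T_0=(0,\ldots,0,1,j)$. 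Combined with the identity $\binom{n}{k}=j\binom{n}{k-1}$, the aggregated equation $\sum_{\vec\lambda}x_{\vec\lambda}(\lambda_k-j\lambda_{k-1})=0$ forces $x_{\vec\lambda}=0$ for all $\vec\lambda\neq T_0$ in any non-negative real solution, contradicting the equations at each level $\ell\in\mathcal{L}\setminus\{k-1,k\}$. The argument would be made rigorous via Farkas' Lemma with separating hyperplane $\vec y$ having $y_{k-1}=j$, $y_k=-1$, and $y_\ell=-\epsilon$ for $\ell<k-1$ (for any sufficiently small $\epsilon>0$), in the spirit of Lemma \ref{th1.3}.

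The positive direction is the main obstacle. When $\gcd(\mathcal{L}\setminus\{k-1\})=1$, a numerical semigroup argument guarantees, for all sufficiently large $n\equiv -1\pmod k$, the existence of ``diagonal'' types $\vec d$ with $\lambda^{\vec d}_{k-1}=0$ and therefore $\lambda^{\vec d}_k-j\lambda^{\vec d}_{k-1}>0$; these furnish the positive balancing term needed to offset the deficit from non-diagonal types. My proposed construction combines (1) many copies of $T_0$ to handle the bulk of levels $k-1$ and $k$ at the correct ratio, (2) for each $\ell\in\mathcal{L}\setminus\{k-1,k\}$, a pair of auxiliary types (one with $\lambda_\ell=k/\gcd(k,\ell)$ and one with $\lambda_\ell=1$, in the spirit of Lemma \ref{lem_solve1}) whose combined multiplicities match $\binom{n}{\ell}$ in integers despite the failure of $k\mid n$, and (3) diagonal types to absorb the level-$k$ surplus produced by step (2). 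The hard step, generalizing and extending the appendix to Lemma \ref{lem_solve2}, is to verify joint non-negative integrality of all multiplicities uniformly in large $n$: this appears to require either a delicate case analysis based on residues of $\binom{n}{\ell}$ modulo $k/\gcd(k,\ell)$ for each $\ell\in\mathcal{L}\setminus\{k-1,k\}$, or a Hilbert-basis / integer-programming argument showing that the integer cone generated by the admissible types is dense enough at every large $n$ in the arithmetic progression.
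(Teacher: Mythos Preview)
This statement is a \emph{conjecture} in the paper, not a theorem: the paper offers no proof, only the two illustrative examples $\mathcal{L}=\{k-1,k\}$ (case (ii)) and $\mathcal{L}=\{2,3,4\}$ (case (i)), and explicitly says ``It would be great if some criteria on those $\mathcal{L}$ satisfying $k-1\in\mathcal{L}$ could be found.'' So there is no proof in the paper to compare your proposal against.

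Your proposal goes well beyond the paper by putting forward a concrete dichotomy criterion, $\gcd(\mathcal{L}\setminus\{k-1\})=1$ (or $\mathcal{L}=\{k-1,k\}$), and your negative direction is a clean and apparently correct argument: when $d=\gcd(\mathcal{L}\setminus\{k-1\})>1$ and $|\mathcal{L}|\ge 3$, every admissible type has $\lambda_{k-1}\ge 1$, hence $\lambda_k-j\lambda_{k-1}\le 0$ with equality only at $T_0$; combined with $\binom{n}{k}=j\binom{n}{k-1}$ this forces all mass onto $T_0$ and kills the lower levels. (The Farkas reformulation with $\epsilon$ is not even needed---the direct linear-combination argument you give already establishes infeasibility.) This is consistent with both of the paper's examples.

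However, the proposal is not a proof of the conjecture, and you say so yourself: the positive direction is only a sketch. The three ingredients you list (bulk use of $T_0$, auxiliary per-level types, diagonal absorbers) are plausible, but the crux---joint non-negative \emph{integer} feasibility uniformly in large $n$---is precisely the kind of delicate divisibility/estimation argument that occupies the appendix for the single case $\mathcal{L}=[k]$, and you have not carried it out for general $\mathcal{L}$ with the coprimality hypothesis. Until that step (or the Hilbert-basis alternative you mention) is actually executed, the conjecture remains open, as the paper also leaves it.
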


When $k-1 \in \mathcal{L}$, it seems that both scenarios are possible.
For example, $\mathcal{L}=\{k-1, k\}$ satisfies $(ii)$, since for $n=jk-1$, we could use the type $\vec{\lambda}=(0, \cdots, 0, 1, j-1)$ exactly $\binom{n}{k-1}$ times, and Theorem \ref{thm_general_0modk} takes care of the $n=jk$ case.
On the other hand, if we take $k=4$ and $\mathcal{L}=\{2,3,4\}$, then for $n=4j-1$, applying Farkas' Lemma with the vector $\vec{y}=(-1/2, j-1, -1)^T$ shows that $\binom{[n]}{\mathcal{L}}$ is not $1$-factorable, and thus  $\mathcal{L}=\{2,3,4\}$ corresponds to case $(i)$ in Conjecture \ref{conj_general_L}. It would be great if some criteria on those $\mathcal{L}$ satisfying $k-1 \in \mathcal{L}$ could be found to tell whether $\binom{[n]}{\mathcal{L}}$ is $1$-factorable for sufficiently large $n \equiv -1 \pmod k$.

\section{Concluding Remarks}\label{sec_concluding}
In this paper, we determine all the pairs of positive integers $(n, k)$ such that $\binom{[n]}{\le k}$ is $1$-factorable. We include a few open questions and possible future projects below.

\begin{itemize}

\item Our method might be extendable to the scenario when repetition of subsets are allowed in the hypergraph. In \cite{bah}, Bahmanian show that the existence of a symmetric Latin cube is equivalent to the existence of a partition into parallel classes of some non-uniform set system. In particular, he determined all $n$ such that $\binom{[n]}{1} \cup 3 \binom{[n]}{2} \cup 2 \binom{[n]}{3}$ is $1$-factorable. Here every pair appears three times and every triple shows up twice. Would it be possible to determine for which $n$ and sequence $\{m_i\}_{i=1, \cdots, n}$ of non-negative integers, $\cup_{i=1}^n m_i \binom{[n]}{i}$ is $1$-factorable?

\item A famous conjecture of Chv\'atal \cite{chvatal_conj} says that any for any given {\it hereditary family} $\mathcal{F}$ (i.e. $A \in \mathcal{F}$ and $B \subset A$ implies $B \in \mathcal{F}$), the maximum size of its intersecting subfamily is attained by taking all sets  containing the most popular element. If we construct a graph $G_{\mathcal{F}}$ such that $V(G_{\mathcal{F}})=\mathcal{F}$, and two vertices $S, T \in \mathcal{F}$ are adjacent if and only if $S \cap T =\emptyset$, then Chv\'atal's conjecture is equivalent to showing $\alpha(G_{\mathcal{F}})=\max_x |\mathcal{F}_x|$, here we denote by $\mathcal{F}_x$ the family of subsets containing $x$. Observe that when $\mathcal{F}$ is {\it regular}, i.e. every element in the ground set occurs for an equal number of times, $\mathcal{F}$ is $1$-factorable if and only if $\chi(\overline{G_{\mathcal{F}}})=\max_x |\mathcal{F}_x|$, which is a stronger bound and immediately implies $\alpha(G_{\mathcal{F}})=\max_x |\mathcal{F}_x|$. Inspired by Theorem \ref{thm_main}, one may wonder whether there is a nice characterization of all the regular hereditary families $\mathcal{F}$ that are $1$-factorable.

\end{itemize}

\appendices

\section{Finding a non-negative integer solution in the proof of Lemma~\ref{lem_solve2}}\label{App}
We consider the case when $k$ is odd, $n=(k^2-k-2)/2+tk$ and $0 \le t\le (k-7)/2.$
In particular $k\geq 7$.
Our aim here is to provide detailed arguments to the proof described for this case of Lemma~\ref{lem_solve2},
as to find a non-negative integer solution to the corresponding system $\eqref{eq1.1}$ for $\mathcal{L}=\{(k+2t+1)/2,\cdots,k-1,k\}$.

First let us determine $x$ and $y$ as non-negative integers.
From \eqref{eq_subset} and \eqref{eq_factor},
\begin{align}\label{eq_y}
	y&=\sum_{i=(k+1)/2+t}^{k}\left(\binom{n}{i}-\left(\frac{k-1}{2}+t\right) \cdot\binom{n-1}{i-1}\right)\nonumber \\
	&=\left(\sum_{i=(k+1)/2+t}^{k-2} \frac{n-i((k-1)/2+t)}{n}\cdot \binom{n}{i}\right) + \frac{(k-3)/2+t}{n} \binom{n}{k-1}  -\frac{1}{n}\binom{n}{k}\nonumber\\
	&=\sum_{i=(k+1)/2+t}^{k-2} \frac{n-i((k-1)/2+t)}{n}\cdot \binom{n}{i}\nonumber\\
	&=\sum_{i=0}^{(k-5)/2-t} \frac{k-2+2t+i((k-1)/2+t)}{n} \binom{n}{k-2-i}.
\end{align}
From the first equality, we know that $y$ is an integer, and the last form clearly indicates that $y$ is non-negative.
From \eqref{eq_subset} and \eqref{eq_factor}, using $n=k(\frac{k+1}{2}+t)-(k+1)$ and $\frac{k+1}{2}+t\leq k+1$, we also have
\begin{align*}\label{eq_x}
	x&=\sum_{i=(k+1)/2+t}^{k}\left(\left(\frac{k+1}{2}+t\right) \cdot\binom{n-1}{i-1}-\binom{n}{i}\right)\\
	&=\sum_{i=(k+1)/2+t}^{k} \frac{i((k+1)/2+t)-n}{n}\cdot \binom{n}{i}\\
	&=\frac{1}{n}\sum_{i=(k+1)/2+t}^{k} \left((k+1)-(k-i)(\frac{k+1}{2}+t)\right)\cdot\binom{n}{i}\\
    &\ge\frac{1}{n}\sum_{i=(k+1)/2+t}^{k-1} \left((k+1)\cdot\binom{n}{i+1}-(k-i)(\frac{k+1}{2}+t)\cdot\binom{n}{i}\right)\\
    &\ge\frac{k+1}{n}\sum_{i=(k+1)/2+t}^{k-1} \frac{n-i-(k-i)(i+1)}{i+1}\cdot \binom{n}{i}\ge 0,
\end{align*}
where the first equality shows $x$ is an integer and the last inequality follows from the fact that $n-i-(k-i)(i+1)=n-(k-i)i-k\geq \frac{k^2-k-2}{2}-\frac{k^2}{4}-k\geq 0$ holds for $k\geq 7$.

Recall the system of $(k-1)/2-t$ equations formed by \eqref{eq_y2}, \eqref{eq_lower_levels} and \eqref{eq_correct_ratio},
and that to find a non-negative integer solution $a_0, a_1, \cdots, a_{(k-5)/2-t}$, $b_1, \cdots, b_{(k-5)/2-t}$ to this system,
we take for each $i\in \{2, 3,\cdots, (k-5)/2-t\}$,
\begin{equation}\label{eq_abi}
b_i=\left\lfloor\binom{n}{k-2-i}/2\right\rfloor ~\mbox{ and }~a_i=\binom{n}{k-2-i}-2b_i\in \{0,1\}.
\end{equation}
It remains to find non-negative integers $a_0, a_1, b_1$ for the system.
Consider $A={\sum_{i=1}^{(k-5)/2-t} ia_i}$ and $B={\sum_{i=1}^{(k-5)/2-t} ib_i}$. Rewriting \eqref{eq_correct_ratio}, we obtain
\begin{equation}\label{B_in_A}
B=\left(\frac{k-3}{2}+t\right)A-ty.
\end{equation}
Now we multiply $i$ to \eqref{eq_lower_levels}, and sum over all defined $i$, we have
\begin{equation}\label{eq_A+2B}
A+2B=\sum_{i=1}^{(k-5)/2-t} i \binom{n}{k-2-i}.
\end{equation}
Plugging in the expression \eqref{B_in_A} on $B$ to \eqref{eq_A+2B}, we can solve $A$ as follows:
\begin{align} \label{eq_A}
	&(k-2+2t)A=2ty+\sum_{i=1}^{(k-5)/2-t} i \binom{n}{k-2-i}\nonumber\\
	=~&\sum_{i=0}^{(k-5)/2-t} \frac{(k-2+2t)(2t+i(t+\frac{k+1}{2})))}{n}\cdot \binom{n}{k-2-i} \\
	=~&(k-2+2t)\sum_{i=0}^{(k-5)/2-t} \left(2t+i(t+\frac{k+1}{2})\right) \frac{\binom{n}{k-2-i}}{n}\nonumber\\
	=~&(k-2+2t)\sum_{i=0}^{(k-5)/2-t} \left(n-(k-2-i)(\frac{k+1}{2}+t)\right) \frac{\binom{n}{k-2-i}}{n}\nonumber\\
	=~&(k-2+2t)\sum_{i=0}^{(k-5)/2-t} \left(\binom{n}{k-2-i}-(\frac{k+1}{2}+t)\binom{n-1}{k-3-i}\right).\nonumber
\end{align}
This shows that $A$ is an non-negative integer (where the third last form shows the non-negativity).
Using \eqref{B_in_A}, $B$ is also an integer.

We now show $a_1, b_1, a_0$ are non-negative integers in order.
First we determine $a_1$ from $A$. If $\frac{k-5}{2}-t=1$, then clearly $a_1=A$ is a non-negative integer.
Assume $\frac{k-5}{2}-t\ge 2$, which implies that $k\ge 9$.
By \eqref{eq_abi}, we have $a_i\in\{0, 1\}$ for $2\le i\le (k-5)/2-t$.
Using \eqref{eq_A} that $A=\sum_{i=0}^{(k-5)/2-t}\frac{2t+i(t+(k+1)/2)}{n}\binom{n}{k-2-i}$, we then get
\begin{align*}
    a_1&=A-\sum_{i=2}^{(k-5)/2-t}ia_i= \sum_{i=2}^{(k-5)/2-t}\left(2t+i(t+\frac{k+1}{2})\right)\frac{\binom{n}{k-2-i}}{n}-\sum_{i=2}^{(k-5)/2-t}ia_i\\
    &\ge \sum_{i=2}^{(k-5)/2-t}\left[\left(2t+i(t+\frac{k+1}{2})\right)\frac{\binom{n}{k-2-i}}{n}-i\right]\ge \sum_{i=2}^{(k-5)/2-t}\left(\frac{k+1}{2}-1\right)i\ge 0,
\end{align*}
as desired.
Since $B$ is an integer, it is easy to see that $b_1=B-\sum_{i=2}^{(k-5)/2-t}ib_i$ is also an integer.
We also have $a_1+2b_1=\binom{n}{k-2-1}$ from \eqref{eq_lower_levels}, which gives
\begin{align*}
    b_1&=\frac{1}{2}\left(\binom{n}{k-3}-a_1\right)
    \ge\frac{1}{2}\left(\binom{n}{k-3}-A\right).
\end{align*}
Using \eqref{eq_A}, $\binom{n}{k-2}/\binom{n}{k-3}=\frac{n-k+3}{k-2}$,
and $\binom{n}{k-3-i-1}/\binom{n}{k-3-i}\le \frac{k-3}{n-k+4}$ for $i\ge0$,
we have
\begin{align*}
    \binom{n}{k-3}-A&=\left(1-\frac{2t(n-k+3)}{n(k-2)}-\frac{\frac{k+1}{2}+3t}{n}\right)\binom{n}{k-3}-\sum_{i=2}^{(k-5)/2-t}\frac{2t+\left(\frac{k+1}{2}+t\right)i}{n}\binom{n}{k-2-i}\\
    &=\frac{k-3-2t}{k-2}\binom{n}{k-3}-\sum_{i=2}^{(k-5)/2-t}\frac{2t+\left(\frac{k+1}{2}+t\right)i}{n}\binom{n}{k-2-i}\\
    &\ge\frac{k-3-2t}{k-2}\binom{n}{k-3}-\sum_{i=2}^{(k-5)/2-t}\frac{(k^2-4k-5)/4}{n}\binom{n}{k-2-i}\\
    &\ge\frac{k-3-2t}{k-2}\binom{n}{k-3}-\frac{k^2-4k-5}{4n}\left(\frac{k-3}{n-k+4}+\left(\frac{k-3}{n-k+4}\right)^2+\cdots\right)\binom{n}{k-3}\\
    &\ge\frac{k-3-2t}{k-2}\binom{n}{k-3}-\frac{k^2-4k-5}{4n}\cdot\frac{k-3}{n-2k+7}\binom{n}{k-3}\\
    &\ge\left(\frac{4}{k-2}-\frac{1}{k-2}\cdot\frac{k^2-8k+15}{k^2-5k+12}\right)\binom{n}{k-3}\ge \frac{3}{k-2}\binom{n}{k-3}\ge 0,
\end{align*}
where the third last inequality uses that $t \le (k-7)/2$ and $n \ge (k^2-k-2)/2$. From the analysis above, we have shown that $b_1$ is a non-negative integer.


Recall that $y=a_0+\sum_{i=1}^{(k-5)/2-t}(a_i+b_i)$.
It is easy to check that $\sum_{i=1}^{(k-5)/2-t}a_i \le\sum_{i=1}^{(k-5)/2-t}ia_i=A$ and $b_i\le \frac{1}{2}\binom{n}{k-2-i}$ for $2\le i\le(k-5)/2-t$.
In addition, we have $b_1=\frac{1}{2}\left(\binom{n}{k-3}-a_1\right)$ and $a_1\ge A-\sum_{i=2}^{(k-5)/2-t}i$. Thus we have
\begin{align*}
    a_0&=y-\sum_{i=1}^{(k-5)/2-t}a_i-\sum_{i=1}^{(k-5)/2-t}b_i\\
    &\ge y-A-\frac{1}{2}\sum_{i=2}^{(k-5)/2-t}\binom{n}{k-2-i}-\frac{1}{2}\left(\binom{n}{k-3}-\left(A-\sum_{i=2}^{(k-5)/2-t}i\right)\right)\\
    &= y-\frac{1}{2}A-\frac{1}{2}\sum_{i=1}^{(k-5)/2-t}\binom{n}{k-2-i}-\frac{\left((k-1)/2-t\right)\left((k-7)/2-t\right)}{4}.
\end{align*}
Recall from \eqref{eq_y} that
\begin{equation}\label{eq_yy}
y=\sum_{i=0}^{(k-5)/2-t}\frac{k-2+2t+i(t+(k-1)/2)}{n}\binom{n}{k-2-i}
\end{equation}
and from \eqref{eq_A} that $A=\sum_{i=0}^{(k-5)/2-t}\frac{2t+i(t+(k+1)/2)}{n}\binom{n}{k-2-i}$.
Using $((k-1)/2-t)((k-7)/2-t)\le (k-1)(k-7)/4\le n/2$ and $\binom{n}{k-2-i-1}/\binom{n}{k-2-i}\le \frac{k-2}{n-k+3}$ for $i\ge 0$, we can derive that (note that $2k-4-n<0$)
\begin{align*}
    a_0&\ge \left(\sum_{i=1}^{(k-5)/2-t}\frac{2k-4+2t+i\left(\frac{k-3}{2}+t\right)-n}{2n}\binom{n}{k-2-i}\right)+\frac{k-2+t}{n}\binom{n}{k-2}-\frac{n}{8}\\
    &\ge \left(\sum_{i=1}^{(k-5)/2-t}\frac{2k-4-n}{2n}\binom{n}{k-2-i}\right)+\frac{k-2}{n}\binom{n}{k-2}-\frac{n}{8}\\
    &\ge -\left(\frac{n-2k+4}{2n}\right)\left(\frac{k-2}{n-k+3}+\left(\frac{k-2}{n-k+3}\right)^2\cdots\right)\binom{n}{k-2}+\frac{k-2}{n}\binom{n}{k-2}-\frac{n}{8}\\
    &=\frac{k-2}{n}\binom{n}{k-2}-\frac{n-2k+4}{2n}\frac{k-2}{n-2k+5}\binom{n}{k-2}-\frac{n}{8}\\
    &\ge\frac{k-2}{2n}\binom{n}{k-2}-\frac{n}{8}\ge\frac{1}{2}\binom{n-1}{k-3}-\frac{n}{8}\ge 0.
\end{align*}
This completes the proof for finding a non-negative integer solution $$\mathcal{S}=\{a_0, a_1, \cdots, a_{(k-5)/2-t}, b_1, \cdots, b_{(k-5)/2-t}\}$$ to the system formed by \eqref{eq_y2}, \eqref{eq_lower_levels} and \eqref{eq_correct_ratio}.

Lastly, we show that the above non-negative integer solution $\mathcal{S}$ together with the non-negative integer $x$ solved from \eqref{eq_subset} and \eqref{eq_factor},
along with the corresponding $(n,\mathcal{L})$-types, give a non-negative integer solution to the corresponding system $\eqref{eq1.1}$ for $\mathcal{L}=\{(k+2t+1)/2,\cdots,k-1,k\}$.
By \eqref{eq_lower_levels}, the contribution of these types to level $k-2-i$ satisfies
\begin{equation}\label{eq_a+b}
a_i+2b_i = \binom{n}{k-2-i} \mbox {~~for each ~} i\in\{1, \cdots, (k-5)/2-t\}.
\end{equation}
Thus, it is enough to check that the contribution of these types to each level $j\in \{k-2, k-1, k\}$ equals $\binom{n}{j}$.
We observe that as $\mathcal{S}$ satisfies \eqref{eq_subset}, \eqref{eq_factor} and \eqref{eq_correct_ratio},
it in fact suffices to verify that the contribution of these types to level $k-2$ equals $\binom{n}{k-2}$,
that is, we need to show the following
\begin{equation}\label{eq_level_k-2}
\frac{k+1}{2}a_0+\sum_{i=1}^{(k-5)/2-t} \left(\frac{k-1}{2}-i\right)a_i+\sum_{i=1}^{(k-5)/2-t} \left(\frac{k-3}{2}-i\right)b_i=\binom{n}{k-2}.
\end{equation}
We claim that the above equation \eqref{eq_level_k-2} is a linear combination of \eqref{eq_correct_ratio}, \eqref{eq_yy}, and \eqref{eq_a+b} for all $i\in\{1, \cdots, (k-5)/2-t\}$,
with coefficients $$\frac{1}{k-2+2t}, \frac{n}{k-2+2t}, \mbox{ and } -\frac{k-2+2t+i((k-1)/{2}+t)}{k-2+2t} \mbox{ for all } i\in\{1, \cdots, \frac{k-5}{2}-t\},$$ respectively.
It is easy to see that the right side hand of the above linear combination equals $\binom{n}{k-2}$.
For the left side hand of this linear combination, a careful calculation, using $t+n=\frac{k+1}{2}(k-2+2t)$ and $y=a_0+\sum_{i=1}^{(k-5)/2-t}(a_i+b_i)$, shows that it equals
\begin{align*}
&\frac{(t+n)y}{k-2+2t}+\frac{1}{k-2+2t} \sum_{i=1}^{(k-5)/2-t} \left[i-2\left(k-2+2t+i\big(\frac{k-1}{2}+t\big)\right)\right]b_i\\
&- \frac{1}{k-2+2t} \sum_{i=1}^{(k-5)/2-t}\left[\big(\frac{k-3}{2}+t\big)i+\left(k-2+2t+i\big(\frac{k-1}{2}+t\big)\right)\right]a_i\\
=& \frac{k+1}{2}\left(a_0+\sum_{i=1}^{(k-5)/2-t}(a_i+b_i)\right)-\sum_{i=1}^{(k-5)/2-t}(i+2)b_i-\sum_{i=1}^{(k-5)/2-t}(i+1)a_i\\
=& \frac{k+1}{2}a_0+\sum_{i=1}^{(k-5)/2-t} \left(\frac{k-1}{2}-i\right)a_i+\sum_{i=1}^{(k-5)/2-t} \left(\frac{k-3}{2}-i\right)b_i,
\end{align*}
which is the left side hand of \eqref{eq_level_k-2}. This proves our claim, completing the proof of this appendix.\qed

\begin{thebibliography}{99}
\bibitem{AF}	
N. Alon and S. Friedland,
\newblock{The maximum number of perfect matchings in graphs with a given degree sequence},
\newblock{\em Electron. J. Combin.} {\bf 15} (2008), N13.

\bibitem{bah}
A.~Bahmanian,
\newblock{Symmetric layer-rainbow colorations of cubes},
\newblock{available at arXiv:2205.02210 [math.CO]}.


\bibitem{Ba14}
A.~Bahmanian,
\newblock{Connected Baranyai's theorem},
\newblock{\em Combinatorica} {\bf 34} (2014), 129--138.


\bibitem{Ba17}
A.~Bahmanian,
\newblock{Factorizations of complete multipartite hypergraphs},
\newblock{\em Discrete Math.} {\bf 340} (2017), 46--50.

\bibitem{BJN}
A.~Bahmanian, A. Johnsen and S. Napirata,
\newblock{Embedding connected factorizations},
\newblock{\em J. Combin. Theory, Ser. B} {\bf 155}  (2022), 358--373.


\bibitem{BN18}
A. Bahmanian and M. Newman,
\newblock{Extending factorizations of complete uniform hypergraphs},
\newblock{\em Combinatorica} {\bf 38} (2018), 1309--1335.


\bibitem{Baranyai}
Zs.~Baranyai,
\newblock{On the factorization of the complete uniform hypergraph},
\newblock{\em Colloq. Math. Soc. Janos Bolyai} {\bf 10} (1975), 91--108.


\bibitem{BB77}
Zs.~Baranyai and A. E. Brouwer,
\newblock{Extension of colourings of the edges of a complete (uniform Hyper)graph},
\newblock{CWI Technical Report ZW} 91/77, 1977.


\bibitem{Ca76}
P. J. Cameron,
\newblock{Parallelisms of complete designs},
\newblock{Cambridge University Press, Cambridge-New York-Melbourne, 1976. London Mathematical Society Lecture Note Series, No. 23.}


\bibitem{CH85}
A.G. Chetwynd and A.J.W. Hilton,
\newblock{Regular graphs of high degree are 1-factorizable},
\newblock{\em Proc. London Math. Soc.} {\bf 50} (1985), 193--206.


\bibitem{CH89}
A.G. Chetwynd and A.J.W. Hilton,
\newblock{1-factorizing regular graphs of high degree --- an improved bound},
\newblock{\em Discrete Math.} {\bf 75} (1989), 103--112.


\bibitem{chvatal_conj}
V. Chv\'atal,
\newblock {Intersecting families of edges in hypergraphs having the hereditary property},
Hypergraph Seminar (Proc. First Working Sem., Ohio State Univ., Columbus, Ohio, 1972; dedicated to Arnold Ross),  61--66. Lecture Notes in Math., Vol. 411, Springer, Berlin, 1974.


\bibitem{CKLOT}
B. Csaba, D. K\"uhn, A. Lo, D. Osthus, A. Treglown,
\newblock{Proof of the $1$-factorization and Hamilton decomposition conjectures},
\newblock{\em Mem. Amer. Math. Soc.} {\bf 244},  (2016), no. 1154, v+164 pp.

\bibitem{DF}
G. Dantzig and D. Fulkerson,
\newblock{On the max-flow min-cut theorem of networks},
\newblock{in: H.W. Kuhn, A.W. Tucker (Eds.), {\em Linear Inequalities and
Related Systems}, in: Annals of Mathematics Study}, vol. 38, Princeton University Press, Princeton, NJ, 1956, 215--221.

\bibitem{Farkas}
J. Farkas,
\newblock{Theorie der Einfachen Ungleichungen},
\newblock{\em Journal f\"ur die Reine und Angewandte Mathematik} {\bf 124} (1902), 1--27.


\bibitem{FJ18}
A. Ferber and V. Jain,
\newblock{1-factorizations of pseudorandom graphs},
2018 IEEE 59th Annual Symposium on Foundations of Computer Science (FOCS), 2018, pp. 698-708,
doi: 10.1109/FOCS.2018.00072


\bibitem{FJS}
A. Ferber, V. Jain, and B. Sudakov,
\newblock{Number of 1-factorizations of regular high-degree graphs},
\newblock{\em Combinatorica} {\bf 40} (2020), 315--344.

\bibitem{HH93}
R. H\"{a}ggkvist and T. Hellgren,
\newblock{Extensions of edge-colourings in hypergraphs, I},
\newblock{in: {\em Combinatorics, Paul Erd\H{o}s is eighty}, Vol. 1, Bolyai Soc. Math. Stud.}, 215--238.
J\'anos Bolyai Math. Soc., Budapest, 1993.


\bibitem{H81}
I. Holyer,
\newblock{The NP-completeness of edge-colouring},
\newblock{\em SIAM Journ. Comp.} {\bf 10} (1981), 718--720.

\bibitem{Kummer}
E. Kummer,
\newblock{\"{U}ber die Erg\"{a}nzungss\"{a}tze zu den allgemeinen Reciprocit\"{a}tsgesetzen},
\newblock{\em Journal f\"{u}r die reine und angewandte Mathematik} {\bf 1852(44)}, 93--146. (Erschienen im Druck 1852)
https://doi.org/10.1515/crll.1852.44.93

\bibitem{MR}
E. Mendelsohn and A. Rosa
\newblock{One-factorizations of the complete graph - a survey},
\newblock{\em J. Graph Theory} {\bf 9} (1985), 43--65.


\bibitem{Peltesohn}
R.~Peltesohn,
\newblock{Das Turnierproblem f\"ur Spiele zu je dreien},
\newblock{Inaugural dissertation, Berlin} 1936.

\bibitem{PR97}
L. Perkovic and B. Reed,
\newblock{Edge coloring regular graphs of high degree},
\newblock{\em Discrete Math.} {\bf 165/166} (1997), 567--578.

\bibitem{PT01}
M.J. Plantholt and S.K. Tipnis,
\newblock{All regular multigraphs of even order and high degree are 1-factorable},
\newblock{\em Electron. J. Combin.} {\bf 8} (2001), R41.


\bibitem{SSTF}
M. Stiebitz, D. Scheide, B. Toft and L.M. Favrholdt,
\newblock{Graph edge coloring: Vizing's Theorem and Goldberg's Conjecture}, Wiley 2012

\bibitem{W92}
W. D. Wallis,
\newblock{One-factorization of complete graphs},
\newblock{Contemporary Design Theory: A Collection of Surveys} (1992) 593--631.

\end{thebibliography}
\end{document}